%% file: main.tex
\documentclass[11pt]{amsart}

\input{Symbol}

\begin{document}
\title{Generalized property F for weakly group-theoretical modular fusion categories}
\author{Enyu Li and Yuze Ruan}
\address{E. Li, Qiuzhen College, Tsinghua University, Beijing, 100084, China}
\email{ley24@mails.tsinghua.edu.cn}

\address{Y. Ruan, Yau Mathematical Sciences Center and Department of Mathematics, Tsinghua University, Beijing, 100084, China}
\email{yuzeruan@mail.tsinghua.edu.cn}

\begin{abstract}
We prove that every weakly group-theoretical modular fusion category, equipped with its canonical positive spherical structure, has generalized property F: the associated Reshetikhin--Turaev mapping class group representations have finite projective image for all compact oriented surfaces with labeled framed points. This generalizes earlier finite-image results for braid group representations associated with weakly group-theoretical braided fusion categories and for mapping class group representations associated with group-theoretical modular fusion categories and Ising categories. In the weakly group-theoretical setting, it also extends the known finiteness result for symmetric mapping class groups of closed surfaces to the full mapping class groups.
\end{abstract}
\maketitle

\section{Introduction}

The property F conjecture of Naidu and Rowell \cite{NR11} states that a braided fusion category has finite images for all associated braid group representations if and only if it is weakly integral. Green and Nikshych \cite{GN21} proved property F for weakly group-theoretical braided fusion categories, extending earlier finite-image results for braided group-theoretical categories \cite{ERW08} and certain metaplectic categories \cite{RW17, GRR20}.

A modular fusion category is said to have \emph{generalized property $F$} if its associated Reshetikhin--Turaev representations of the label-preserving mapping class groups have finite projective image for all compact oriented surfaces with labeled framed points. For the closed torus, finiteness holds for every modular fusion category \cite{NS10}, whereas the higher-genus case is more subtle. Known finite-image results include those for pointed modular categories \cite{bloomquist2018}, Drinfeld doubles of finite groups \cite{FF20}, their twisted analogues \cite{Gustafson18}, and Ising theories \cite{Wright94, JLLHW20, RR24}. For the broader class of weakly group-theoretical modular fusion categories, finiteness has been established upon restriction to the symmetric mapping class groups of closed surfaces \cite{LR24}. This naturally raises the question of whether finiteness extends to the full mapping class groups, including those of surfaces with labeled framed points.

In this paper, we prove that every weakly group-theoretical modular fusion category, equipped with its canonical positive spherical structure, has generalized property F (Theorem \ref{thm:closed_property_F}). The proof is inspired by the strategy of Green and Nikshych \cite{GN21}, who use fiber products of braided $G$-crossed fusion categories to reduce braid group finiteness to the group-theoretical case. Our proof has two main ingredients.

First, let $G$ be a finite group, let $\CC$ be a $G$-graded spherical fusion category, and let $\Sigma$ be a compact oriented surface with nonempty boundary. The string-net space $\SN_{\CC}(\Sigma)$ with empty boundary values admits a canonical decomposition \cite{TuraevVirelizier2012,CGP2023} (see Lemma \ref{lem:blockdecomposition}):
\[
      \SN_{\CC}(\Sigma)=\bigoplus_{\sigma\in[(\Sigma,\partial\Sigma),(BG,*)]}\SN_{\CC}(\Sigma)_{\sigma}.
\]
whose summands are permuted by the mapping class group.  We prove that this decomposition is compatible with
fiber powers: for every $m\geq 1$, there is a canonical equivariant isomorphism, 
\[
   \SN_{\CC^{\boxtimes_G m}}(\Sigma) \cong \bigoplus_{\sigma\in[(\Sigma,\partial\Sigma),(BG,*)]}  \left(\SN_{\CC}(\Sigma)_\sigma\right)^{\otimes m} .
\]
Under this isomorphism, a mapping class $f$ acts from  $(\SN_{\CC}(\Sigma)_{\sigma})^{\otimes m}$ to $(\SN_{\CC}(\Sigma)_{f\cdot\sigma})^{\otimes m}$ by the $m$-fold tensor power of the corresponding action of $f$ from $\SN_{\CC}(\Sigma)_{\sigma}$ to $\SN_{\CC}(\Sigma)_{f\cdot\sigma}$ (see Theorem \ref{thm:blockwise-tensor-power}). 

Second, we establish a categorical reduction: for every weakly group-theoretical fusion category $\AA$, there exist a finite group $H$ and an $H$-graded fusion category $\DD$, Morita equivalent to $\ZZ(\AA)$, such that the eighth fiber power $\DD^{\boxtimes_H 8}$ is group-theoretical
(Theorem~\ref{thm:D8_gt}). The key categorical input is a construction of an invariant Lagrangian subcategory in the eighth Deligne power of any pointed modular fusion category (Proposition~\ref{prop:stable_Lagrangian}).

These two ingredients reduce the generalized property F of weakly group-theoretical modular category to the group-theoretical case.  The finite-image theorem for twisted Dijkgraaf--Witten theories \cite{Gustafson18}, together with
Morita invariance of Drinfeld centers and the comparison between string-net and Turaev--Viro theories and the Reshetikhin--Turaev theories of their Drinfeld centers, gives finiteness for the string-net representations associated with $\DD^{\boxtimes_H 8}$ on surface with one boundary component. The blockwise tensor-power description then implies finiteness for those associated with $\DD$. When $\AA$ is modular, passing to $\ZZ(\DD)$ and using ribbon equivalences
\[
\ZZ(\DD)\simeq\ZZ(\ZZ(\AA))\simeq\AA\boxtimes\AA^{\mathrm{rev}}\boxtimes\AA^{\mathrm{rev}}\boxtimes\AA
\]
yields the generalized property F for $\AA$ (Theorem \ref{thm:closed_property_F}).

The paper is organized as follows. Section~\ref{sec:prelim} reviews the categorical background, the relevant TQFT constructions, and their mapping class group representations. Section~\ref{sec:string-net} studies string-net spaces associated with graded spherical fusion categories and proves Theorem~\ref{thm:blockwise-tensor-power}. Section~\ref{sec:Categorical manipulation} establishes the categorical reduction of Theorem~\ref{thm:D8_gt}. Finally, Section~\ref{sec:Generalized property F} combines these results to prove the main theorem (Theorem~\ref{thm:closed_property_F}).

\subsection*{Notation and conventions}
We use the abbreviations RT, TV and SN for Reshetikhin--Turaev, Turaev--Viro and string-net, respectively. Throughout, all surfaces are compact, connected and oriented. 

\begingroup
\renewcommand{\arraystretch}{1.15}
\setlength{\LTleft}{0pt}
\setlength{\LTright}{0pt}

\begin{longtable}{@{}
  >{\raggedright\arraybackslash}p{0.34\textwidth}
  >{\raggedright\arraybackslash}
    p{\dimexpr0.66\textwidth-2\tabcolsep\relax}
  @{}}

\toprule
Notation & Meaning \\
\midrule
\endfirsthead

% Header repeated on subsequent pages:
\toprule
Notation & Meaning (continued) \\
\midrule
\endhead

% Bottom rule on every page:
\bottomrule
\endfoot

$\one$, $X^*$
  & Tensor unit and dual of $X$ \\

$\Irr(\CC)$
  & Representatives of the simple objects of $\CC$ \\

\addlinespace
$\CC\boxtimes\DD$ & Deligne tensor product \\

$\CC^{\boxtimes m}$ & $m$-fold Deligne tensor power \\

$\CC_g$, $\CC_e$ & Degree-$g$ and neutral components of a $G$-grading \\

$\CC\boxtimes_G\DD$& Fiber product $\bigoplus_{g\in G}\CC_g\boxtimes\DD_g$ \\

$\CC^{\boxtimes_G m}$ & $m$-th fiber power $\bigoplus_{g\in G}\CC_g^{\boxtimes m}$ \\

\addlinespace
$\CC^G$ & Equivariantization by a specified $G$-action \\

$\CC_G$ & De-equivariantization by a specified Tannakian subcategory equivalent to $\Rep(G)$ \\

\addlinespace
$\ZZ(\CC)$ & Drinfeld center of $\CC$ \\

$\CC^{\mathrm{rev}}$ & $\CC$ with the reversed braiding \\

$\CC(A,q)$ & Pointed braided fusion category associated with $(A,q)$ \\
  
$\Sigma_g,\ \Sigma_g^n$ & A closed genus-$g$ surface, and a genus-$g$ surface with $n$ boundary components, respectively. \\

$\Sigma_{g,n},\ \Sigma_{g,n}^{\mathrm{fr}}$ & A closed genus-$g$ surface with $n$ unframed or framed marked points, respectively. \\

$\operatorname{Mod}(\Sigma)$ & The orientation-preserving mapping class group. \\

$\operatorname{PMod}(\Sigma)$ & The pure subgroup, fixing each marked point individually. \\

$\widehat{\operatorname{Mod}}(\Sigma)$ & The integer central extension used in RT theory. \\

$\operatorname{SN}_{\CC}(\Sigma)$ & The string-net  space. \\

$\operatorname{TV}_{\CC}(\Sigma)$ & The Turaev--Viro state space. \\

$\operatorname{RT}_{\AA}(\Sigma)$
& The Reshetikhin--Turaev state space. \\

$\rho^{\CC}_\Sigma$
& The linear string-net/TV representation of
  $\operatorname{Mod}(\Sigma)$. \\

$\overline{\rho}^{\AA}_\Sigma$
& The projective RT representation
  $\operatorname{Mod}(\Sigma)\to
  \operatorname{PGL}(\operatorname{RT}_{\AA}(\Sigma))$. \\

$\widehat{\rho}^{\AA}_\Sigma$
& The extended linear RT representation
  $\widehat{\operatorname{Mod}}(\Sigma)\to
  \operatorname{GL}(\operatorname{RT}_{\AA}(\Sigma))$. \\

$Z^{\AA}_\Sigma(f)$
& RT operator for a chosen lift of $f$; when $\AA=\ZZ(\CC)$, these operators define a linear representation of $\Mod(\Sigma)$ \\

\end{longtable}
\endgroup

\section*{Acknowledgements}
The authors would like to thank Shuang Ming for helpful discussions. This work was supported by Beijing Municipal Science \& Technology Commission [Z221100002722017 to Y.R.].

Generative AI was used to assist with literature searches, create the table and figures, and proofread the manuscript. The authors take full responsibility for the contents of this paper.

\section{Preliminaries}\label{sec:prelim}
\subsection{Spherical fusion categories}
A \emph{fusion category} over $\BC$ is a semisimple, $\BC$-linear abelian, rigid monoidal category with finite-dimensional Hom-spaces, finitely many isomorphism classes of simple objects, and a simple tensor unit $\one$.  For a fusion category $\CC$,  let $\Irr(\CC)$ denote a set of representatives of its simple objects. The Frobenius–Perron dimension of a simple object $X$, denoted by $\FPdim(X)$, is the Perron-Frobenius eigenvalue of its fusion matrix, and \[
\FPdim(\CC)=\sum_{X\in\Irr(\CC)}\FPdim(X)^2
.\] 
The category $\CC$ is called \emph{integral} if every simple object has integer Frobenius-Perron dimension, and \emph{weakly integral} if $\FPdim(\CC)\in\BZ$. It is \emph{pointed} if all its simple objects are invertible. A \emph{pivotal structure} on $\CC$ is a monoidal natural isomorphism 
\[
j:\operatorname{id}_{\CC}\xrightarrow{\sim}(-)^{**}.\]
It is spherical if the associated left and right categorical traces coincide. A fusion category equipped with such a structure is called a \emph{spherical fusion category}. We write $d(X)$ for the categorical dimension of an object $X$. The global dimension is
\[
\dim(\CC)=\sum_{X\in\operatorname{Irr}(\CC)}d(X)^2.
\] 
A fusion category is \emph{pseudo-unitary} if $\dim(\CC)=\FPdim(\CC)$. Every weakly integral fusion category is pseudo-unitary and admits a unique spherical structure for which all simple objects have positive categorical dimensions \cite{ENO05}. 

The \emph{adjoint subcategory} $\CC_{\mathrm{ad}}$ is the fusion subcategory generated by the simple constituents of $X\otimes X^*$, where $X\in\Irr(\CC)$. Set
\[
\CC^{(0)}=\CC,
\qquad
\CC^{(n+1)}=\bigl(\CC^{(n)}\bigr)_{\mathrm{ad}}.
\]
The category $\CC$ is nilpotent if $\CC^{(n)}\simeq\mathrm{Vec}$ for some $n\geq0$. Equivalently, it is obtained from $\mathrm{Vec}$ by successive faithful extensions by finite groups. Here a $G$-extension of $\DD$ is a fusion category with a faithful grading
\[
\CC=\bigoplus_{g\in G}\CC_g,
\qquad
\CC_e=\DD,
\] 
where $\CC_g\neq0$ and $\CC_g\otimes\CC_h\subseteq\CC_{gh}$ for all $g,h\in G$. We refer to \cite{ENO05, GN08, ENO10} for these notions and their basic properties. 

An \emph{action} of a finite group $G$ on a fusion category $\CC$ consists of tensor autoequivalences $T_g$ and monoidal isomorphisms $\mu_{g,h}:T_g\circ T_h\xrightarrow{\sim}T_{gh}$, satisfying the usual coherence conditions. The \emph{equivariantization} $\CC^G$ is the fusion category of pairs $(X,u)$, where $X\in\CC$ and $u_g:T_g(X)\xrightarrow{\sim}X$ satisfy
\[
u_g\circ T_g(u_h)=u_{gh}\circ\mu_{g,h,X},\qquad g,h\in G,
\]
together with the unit condition. Morphisms $f: (X,u)\to(Y,v)$ satisfy $f\circ u_g=v_g\circ T_g(f)$, and the tensor product is induced by that of $\CC$ and the tensor structures of the $T_g$.

Two fusion categories $\CC$ and $\DD$ are \emph{Morita equivalent} if $\DD$ is tensor equivalent to the category of $\CC$-module endofunctors of an indecomposable semisimple left $\CC$-module category, with tensor product given by reverse composition \cite{Mug031, ENO05}. A fusion category is \emph{group-theoretical} if it is Morita equivalent to a pointed fusion category, and \emph{weakly group-theoretical} if it is Morita equivalent to a nilpotent fusion category \cite{ENO11}.

Every group-theoretical fusion category is integral and weakly group-theoretical, while every weakly group-theoretical fusion category is weakly integral, and thus is pseudo-unitary.  The class of weakly group-theoretical fusion categories is closed under fusion subcategories, Deligne tensor products, and extensions and equivariantizations by finite groups \cite{ENO05,ENO11}. Throughout, we equip such categories with their canonical positive spherical structures. These structures are compatible with the above constructions: they restrict to the canonical structures on fusion subcategories, and agree with the induced structures on Deligne tensor products and equivariantizations. Moreover, tensor equivalences between such categories preserve their canonical spherical structures \cite[Corollary~6.2]{NS07a}.

\subsection{Modular fusion categories}
Let $\CC$ be a braided fusion category with braiding $c$. The \emph{Müger centralizer} of a fusion subcategory $\DD\subseteq\CC$ is the full fusion subcategory$$\DD'
=
\left\{
X\in\CC
\ \middle|\
c_{Y,X}\circ c_{X,Y}
=
\operatorname{id}_{X\otimes Y}
\text{ for every }Y\in\DD
\right\}.$$The subcategory $\CC'$ is the \emph{Müger center}, and $\CC$ is \emph{non-degenerate} if $\CC'\simeq\mathrm{Vec}$ \cite{Mug03S}. A spherical structure on a braided fusion category determines a compatible ribbon structure \cite{BK01,DGNO}. A \emph{modular fusion category} is a braided spherical fusion category whose braiding is non-degenerate. A modular fusion category is weakly group-theoretical if its underlying fusion category is weakly group-theoretical. Thus every non-degenerate weakly group-theoretical braided fusion category becomes modular upon choosing its canonical positive spherical structure. 

A fusion subcategory $\EE\subseteq\CC$ is \emph{Tannakian} if it is braided equivalent to $\Rep(G)$, with its standard symmetric braiding. It is \emph{Lagrangian} if it is Tannakian and $\EE'=\EE$ \cite[Def.~4.57]{DGNO}. Let $R=\operatorname{Fun}(G)$ be the regular algebra in $\EE$. The \emph{de-equivariantization} $\CC_G$ is the fusion category of right $R$-modules in $\CC$, with relative tensor product $\otimes_R$ \cite[Sec.~8.23]{EGNO}. The category $\CC_G$ has a canonical $G$-crossed braided structure: it is $G$-graded, carries an action $T$ of $G$ satisfying $T_g((\CC_G)_h)\subseteq(\CC_G)_{ghg^{-1}}$, and has natural isomorphisms
\[
c^G_{X,Y}:X\otimes_R Y\xrightarrow{\sim}T_g(Y)\otimes_R X,\qquad X\in(\CC_G)_g,\quad Y\in\CC_G,
\]
satisfying the crossed-braiding axioms \cite[Def.~8.24.1]{EGNO}. Equivariantization recovers $\CC$:
\[
(\CC_G)^G\simeq\CC
\]
as braided fusion categories \cite[Thm.~4.44]{DGNO}. The neutral component $(\CC_G)_e$ is braided. If $\CC$ is non-degenerate, then the grading is faithful and $(\CC_G)_e$ is non-degenerate. By Natale's theorem \cite[Thm.~1.1]{Nat18}, every weakly group-theoretical modular category $\CC$ admits a presentation
\begin{equation}\label{eq:Natale}
\CC\simeq\DD^G,
\qquad
\DD_e\simeq\mathcal P\boxtimes\mathcal B,
\end{equation}
as braided fusion categories, where $G$ is a finite group, $\DD$ is a faithfully $G$-graded $G$-crossed braided fusion
category, $\PP$ is non-degenerate and pointed, and $\BB$ is either $\mathrm{Vec}$ or an Ising braided category.

The \emph{Drinfeld center} $\ZZ(\AA)$ of a fusion category $\AA$ consists of pairs $(X,\gamma)$, where $\gamma_Y:X\otimes Y\xrightarrow{\sim}Y\otimes X$ is a half-braiding. If $\AA$ is spherical, then $\ZZ(\AA)$ is modular with the induced spherical structure, characterized by the forgetful functor $F:\ZZ(\AA)\to\AA$ being pivotal. Thus $d_{\ZZ(\AA)}(X,\gamma)=d_{\AA}(X)$. In particular, the canonical positive spherical structure on a weakly integral category induces the canonical positive spherical structure on its center \cite[Sec.~2]{DGNO}. If $\AA$ is weakly group-theoretical, so is $\ZZ(\AA)$ \cite[Prop.~4.1]{ENO11}. Two fusion categories are Morita equivalent if and only if their centers are braided equivalent \cite[Thm.~3.1]{ENO11}. Consequently, a fusion category $\AA$ is group-theoretical if and only if there exist a finite group $G$ and a normalized cocycle $\omega\in Z^3(G,\BC^\times)$ such that
\[
\ZZ(\AA)\simeq\ZZ(\mathrm{Vec}_G^\omega),
\]
as braided fusion categories.

If $\AA$ is modular, there is a canonical braided equivalence
\[
\ZZ(\AA)\simeq\AA\boxtimes\AA^{\mathrm{rev}},
\]
where $\AA^{\mathrm{rev}}$ has the same underlying fusion category and reversed braiding $c^{\mathrm{rev}}_{X,Y}=c_{Y,X}^{-1}$ \cite{Mug032, DGNO}. Tensor equivalences between weakly group-theoretical fusion categories preserve their canonical positive spherical structures \cite[Cor.~6.2]{NS07a}, hence the above braided equivalences are automatically ribbon with respect to the canonical twists.  

A non-degenerate braided fusion category is braided equivalent to the center of a pointed fusion category if and only if it contains a Lagrangian subcategory \cite[Thm.~4.64]{DGNO}. Therefore $\AA$ is group-theoretical if and only if $\ZZ(\AA)$ contains a Lagrangian subcategory.

Finally, pointed braided fusion categories $\CC(A, q)$ are classified by pairs $(A,q)$ as follows; see \cite[Sec.~8.4]{EGNO}. Here $A$ is a finite abelian group and $q:A \to \BC^\times$ is a quadratic form on $A$. Simple objects in $\CC(A, q)$ are in one-to-one correspondence to the elements of $A$. Moreover, $\CC(A, q)$ is a ribbon fusion category whose ribbon structure is determined by the twists $\theta_g=q(g)$ for $g \in A = \Irr(\CC(\AA, q))$. It is modular if and only if $b_q(g_1,g_2)=\frac{q(g_1+g_2)}{q(g_1)q(g_2)}$ is a non-degenerate bicharacter on $A$.

\subsection{Mapping class groups}
Let $\Sigma_g$ be a connected closed oriented surface of genus $g$, and let $P=\{p_1,\cdots p_n\}$ be a set of distinct points. We denote the punctured surface by $\Sigma_{g,n}$. Write $\Mod(\Sigma_{g,n})$ for the group of isotopy classes of orientation-preserving diffeomorphisms, allowing puncture permutations, and $\PMod(\Sigma_{g,n})$ for the subgroup of finite index fixing each puncture individually. Equivalently, punctures may be treated as unframed marked points \cite[Sec.~2.1]{FM12}.

Equip each $p_i$ with a nonzero tangent vector. Denote the resulting surface with $n$ framed points are denoted by $\Sigma^{\mathrm{fr}}_{g,n}$. The corresponding mapping class group and pure mapping class group are denoted by $\Mod(\Sigma^{\mathrm{fr}}_{g,n})$ and $\PMod(\Sigma^{\mathrm{fr}}_{g,n})$ respectively (cf.~\cite{RR24}). For $n=0$, all groups reduce to $\Mod(\Sigma_g)$.

Removing disjoint small disks around the marked points gives a compact surface $\Sigma^n_g$ with $n$ boundary components. Denote by $\operatorname{Mod}(\Sigma^n_g)$ its mapping class group, with representatives and isotopies fixing the boundary pointwise. Capping with disks carrying framed points gives the following identification \cite[Sec.~4.2.5]{FM12}

\begin{equation}\label{eq:capping}
\Mod(\Sigma^n_g)\cong\PMod(\Sigma^{\mathrm{fr}}_{g,n}).    
\end{equation}
 
Boundary twists correspond to Dehn twists about framed points; forgetting the framings quotients out these twists \cite[Prop.~3.19]{FM12}.

% Suppose $n\geq 1$ and $\Sigma_g$ embeds in a closed oriented surface $\Sigma'$. If $\Sigma'$ is not a closed annulus and no component of  $\Sigma'\setminus \Sigma_g$ is an open disk, extension by the identity induces an injection \cite[Thm~3.18]{FM12}
% \[
% \PMod(\Sigma^{\mathrm{fr}}_{g,n})
% \cong\operatorname{Mod}(\Sigma^n_g)
% \hookrightarrow\operatorname{Mod}(\Sigma').
% \]

\subsection{Turaev--Viro TQFT via string nets}\label{subsec:stringnets}

 We recall Turaev--Viro TQFT via the Kitaev--Levin--Wen string-net model \cite{Kitaev03,LevinWen05}, following Kirillov's formulation \cite{Kirillov11}. 
Throughout this article, the bordered string-net and TV spaces carry empty boundary value.  

Let $\CC$ be a spherical fusion category and let $\Sigma$ be a compact connected oriented surface, possibly with boundary. A \emph{colored graph with empty boundary value} in $\Sigma$ is a finite graph with oriented edges embedded in the interior of $\Sigma$. Each edge is labeled by an object of $\CC$, and reversing an edge orientation replaces its label by its dual. Each vertex $v$ is labeled by a morphism
\[
  \varphi_v\in\Hom_{\CC}\!\left(\one,
  X_{h_1}\otimes\cdots\otimes X_{h_n}\right),
\]
where $h_1,\dots,h_n$ are its incident half-edges in counterclockwise order, and $X_{h_i}$ is the edge label, dualized when the edge points toward $v$. A starting half-edge is chosen to write the tensor product; changing it uses the canonical cyclic identifications given by the pivotal structure.

Let $\operatorname{Graph}_{\CC}(\Sigma)$ be the vector space spanned by these colored graphs. The \emph{string-net space} is
\[
  \SN_{\CC}(\Sigma)  :=\operatorname{Graph}_{\CC}(\Sigma)/N_{\CC}(\Sigma),
\]
where $N_{\CC}(\Sigma)$ is spanned by \emph{null combinations}: finite sums $\sum_j c_j\Gamma_j$ whose graphs agree outside an embedded closed disk $D\subset\operatorname{int}(\Sigma)$, have identical transverse boundary data on $\partial D$, and satisfy
\[
  \sum_j c_j\,\langle\Gamma_j\cap D\rangle=0.
\]
Here $\langle\,\cdot\,\rangle$ denotes disk evaluation in the graphical calculus of $\CC$ \cite[Secs.~2--3]{Kirillov11}. 
The orientation-preserving mapping class group $\Mod(\Sigma)$ with homeomorphisms and isotopies fixing $\partial\Sigma$ pointwise acts naturally on the string-net space by  \( \rho_{\Sigma}^\CC(f)([\Gamma])=[f(\Gamma)].\)

\begin{Thm}[Kirillov \cite{Kirillov11}, see also \cite{Bartlett2026StringNets}]\label{Thm:SNTV}
   Let $\Sigma$ be a compact surface possibly with boundary. There is a canonical isomorphism of mapping class group representations:
    \[
    \TV_{\CC}(\Sigma)\cong \SN_{\CC}(\Sigma).
    \]
\end{Thm}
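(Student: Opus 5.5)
The plan is to compare both sides through the skeletal, state-sum description of Turaev--Viro spaces on a cell decomposition of $\Sigma$. I will build an explicit map from string nets into it, show that this map is an isomorphism, and then check that it is equivariant.

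\textbf{Step 1: the TV space.} Fix a cell decomposition (or skeleton) $\Gamma_0$ of $\Sigma$; for a surface with boundary, take one whose complement is a union of disks together with collars of the boundary components. Using the gluing formula of TV theory, $\TV_{\CC}(\Sigma)$ is the image of the idempotent $P_{\Gamma_0}$ on the space
\[
V(\Gamma_0)=\bigoplus_{\text{colorings by }\Irr(\CC)}\bigotimes_{v}\Hom_{\CC}(\one,X_{h_1}\otimes\cdots\otimes X_{h_n}).
\]
Here $P_{\Gamma_0}$ is the TV amplitude of the cylinder $\Sigma\times[0,1]$. Equivalently, it is the product of commuting plaquette projectors, each of which inserts a loop labelled by the normalized regular element $\dim(\CC)^{-1}\sum_X d(X)X$ in a face.

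\textbf{Step 2: the comparison map.} Define $\Phi:\operatorname{Graph}_{\CC}(\Sigma)\to \operatorname{Im}P_{\Gamma_0}$ as follows. Isotope an arbitrary colored graph into a neighbourhood of $\Gamma_0$, using general position, and then fuse it onto the edges of $\Gamma_0$ by local disk evaluations. Local relations are respected because disk evaluations are precisely what the projector $P_{\Gamma_0}$ absorbs, so $\Phi$ kills $N_{\CC}(\Sigma)$.

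\textbf{Step 3: bijectivity.} Surjectivity is immediate, since each element of $V(\Gamma_0)$ is itself a string net. For injectivity I construct an inverse: send $P_{\Gamma_0}$-invariant vectors to their underlying graphs. The key lemma is that any string net equals, modulo null combinations, its image under the plaquette projectors. This holds because inserting the regular loop in a disk face is a local identity: the loop can be shrunk away, and the normalization by $\dim(\CC)$ makes this exact. On surfaces with boundary and empty boundary value, the collar faces need the same argument. There, the absence of boundary labels means no extra boundary projectors appear.

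\textbf{Step 4: independence and equivariance.} Show that the isomorphism is independent of $\Gamma_0$. Any two skeleta are related by elementary moves (Pachner/Matveev moves, or edge slides), and TV theory identifies the corresponding spaces by the cylinder amplitudes realizing those moves. On the string-net side, both identifications are induced by isotopy plus local evaluation, so they agree. Equivariance then follows: $f\in\Mod(\Sigma)$ acts on TV spaces via the mapping cylinder, which is the composite of transporting along $f(\Gamma_0)$ with the change-of-skeleton isomorphism from $f(\Gamma_0)$ back to $\Gamma_0$, while on string nets it acts by $[\Gamma]\mapsto[f(\Gamma)]$. Naturality of $\Phi$ under homeomorphisms gives commutativity.

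The main obstacle is Step 4: matching the TV change-of-skeleton isomorphisms, which are defined via 3-manifold state sums, with the purely 2-dimensional fusion maps. I would first reduce this to checking a finite list of local moves. For each move, the state sum on the corresponding elementary cylinder is a single $6j$-symbol contraction, which coincides with an $F$-move in the graphical calculus.
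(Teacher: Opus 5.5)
Up to passing to the dual cell decomposition, your route is the one in Kirillov's proof \cite{Kirillov11}, which the paper cites rather than reproves. It consists of a fusion (cutting) map into the cellular state space, the dual-graph map back, and the identification of the TV cylinder operator with a product of regular-loop projectors. Steps 3 and 4 are fine in outline. The real content, however, sits in Step 2, which you treat as routine, and there the justification does not work as stated.

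The assignment ``push $\Gamma$ into a neighbourhood $N$ of $\Gamma_0$, then fuse'' is not well defined on graphs. A small loop labelled $X$ inside a face $p$ can be pushed into $N$ in two ways:
\begin{itemize}
\item as a contractible loop, giving $d(X)$ times the empty configuration;
\item as a loop parallel to $\partial p$, giving the fusion of an $X$-loop into the edges of $\partial p$.
\end{itemize}
For $X\neq\one$ these are different vectors of $V(\Gamma_0)$. So $\Phi$ has to be $P_{\Gamma_0}\circ\mathrm{fuse}$, and the sentence ``disk evaluations are precisely what the projector absorbs'' is essentially the theorem itself. Your key lemma in Step 3, $[P_{\Gamma_0}w]=[w]$ in $\SN_{\CC}(\Sigma)$, is only the easy converse and does not supply it.

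What is missing is a general-position reduction plus a sliding lemma. Two graphs in $N$ that are isotopic in $\Sigma$ differ by isotopies inside $N$ and finitely many elementary moves, in each of which a single strand is swept across one disk face $p$. The same applies to a null combination once its disk has been isotoped into a vertex disk of $N$, where fusion then kills it outright.

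Isotopies inside $N$ are handled by the local identities you already use (resolution of the identity, vertex passage through a cut). This is exactly as in the proof of Lemma~\ref{lem:spine-isomorphism}, which needs no projector precisely because its cutting arcs end on $\partial\Sigma$.

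The sweep across $p$ needs a new input: the plaquette projector $B_p$ of $p$, composed with fusion, is unchanged by the sweep, i.e.\ a strand can be slid past the normalized regular loop. After fusing the strand $X$ into the loop, both sides become the same theta-shaped graph. Their coefficients agree by the identity
\[
d(X_i)\sum_\alpha\iota_\alpha\otimes\pi_\alpha=d(X_k)\sum_\beta\iota'_\beta\otimes\pi'_\beta .
\]
Here the left side uses composition-dual bases of $\Hom(X_k,X\otimes X_i)$ and the right side those of its pivotal rotation with source $X_i$. Both equal $d(X_i)d(X_k)$ times the canonical element of the rotation-invariant trace pairing.

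With this in place, Step 4 is the comparatively routine part: it amounts to checking normalizations for finitely many local moves. Finally, Step 1 asserts that the TV cylinder amplitude equals $\prod_p B_p$; this is itself a state-sum computation that you must either carry out or cite.
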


We now show that finiteness of the mapping class group image on string-net spaces of closed surface reduces to the case of the corresponding result of surface with one boundary component, whose string-net space is easier to compute as we shall see later.
Fix $g\geq 0$, let $\Sigma_g$ be the closed oriented surface of genus $g$, $D_1$ a closed disk embedded in $\Sigma_g$, and $\Sigma_{g}^1:=\Sigma_g\setminus \mathrm{int}( D_1)\overset{\iota}{\hookrightarrow} \Sigma_g $. Consider the mapping class groups $\Mod(\Sigma_g)$ and $\Mod(\Sigma^1_{g})$ where homeomorphisms and isotopies fix the boundary pointwise, extending by identity on the disk $D_1$ gives a natural homomorphism  
$
\kappa:\Mod(\Sigma_{g}^1)\to \Mod(\Sigma_g).
$ 
\begin{lem}\label{lem:puncture-filling}
If the representation $\rho_{\Sigma_{g}^1}^{\CC}:\Mod(\Sigma_{g}^1)\to \GL(\SN_{\CC}(\Sigma_{g}^1))$ has finite image, then so does the representation $\rho_{\Sigma_g}^{\CC}:\Mod(\Sigma_g)\to GL(\SN_{\CC}(\Sigma_g))$.
\end{lem}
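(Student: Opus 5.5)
The argument uses two facts: the extension-by-identity map $\kappa$ is surjective, and the inclusion $\iota$ induces a surjective, equivariant map on string-net spaces. Finiteness then passes from $\SN_{\CC}(\Sigma_g^1)$ to $\SN_{\CC}(\Sigma_g)$.

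\textbf{Step 1: surjectivity of $\kappa$.} The map $\kappa:\Mod(\Sigma_g^1)\to\Mod(\Sigma_g)$ is surjective. Any orientation-preserving homeomorphism of $\Sigma_g$ can be isotoped to fix $D_1$ pointwise. This holds because the space of orientation-preserving embeddings of a disk in a connected oriented surface is connected; equivalently, it follows from the capping and Birman exact sequences \cite[Sec.~4.2]{FM12}. Restricting the isotoped homeomorphism to $\Sigma_g^1$ gives a preimage.

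\textbf{Step 2: the map $\iota_*$.} Define $\iota_*:\SN_{\CC}(\Sigma_g^1)\to\SN_{\CC}(\Sigma_g)$ by sending a colored graph in $\operatorname{int}(\Sigma_g^1)$ to the same graph viewed in $\Sigma_g$.
\begin{itemize}
\item It is well defined: every disk in $\operatorname{int}(\Sigma_g^1)$ is a disk in $\Sigma_g$, so null combinations go to null combinations.
\item It is surjective: any colored graph in $\Sigma_g$ can be isotoped off a small disk. Isotopic graphs are equal in $\SN$, since isotopy within a disk is a local relation and a general isotopy is a composite of such moves. So after shrinking, the graph avoids $D_1$.
\item It is equivariant: for $f\in\Mod(\Sigma_g^1)$ we have $\iota_*(f(\Gamma))=\kappa(f)(\iota_*\Gamma)$, because $\kappa(f)$ extends $f$ by the identity on $D_1$.
\end{itemize}
Together these give $\rho^{\CC}_{\Sigma_g}(\kappa(f))\circ\iota_*=\iota_*\circ\rho^{\CC}_{\Sigma_g^1}(f)$.

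\textbf{Step 3: transfer of finiteness.} Let $K$ be the kernel of $\rho^{\CC}_{\Sigma_g^1}$, which has finite index by hypothesis. For $f\in K$, equivariance and surjectivity of $\iota_*$ give $\rho^{\CC}_{\Sigma_g}(\kappa(f))=\mathrm{id}$. So $\rho^{\CC}_{\Sigma_g}\circ\kappa$ factors through the finite group $\Mod(\Sigma_g^1)/K$. Since $\kappa$ is surjective, the image of $\rho^{\CC}_{\Sigma_g}$ is a quotient of a finite group, hence finite.

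\textbf{Main obstacle.} The delicate point is the surjectivity of $\iota_*$, which depends on the chosen definition of $\SN_{\CC}$. If isotopy invariance of graphs is not built into the definition, I would derive it from the local relations: isotope across a small disk containing part of the graph, using that disk evaluation is isotopy invariant. If one prefers to avoid this, Theorem~\ref{Thm:SNTV} together with the gluing (annulus/cylinder) maps of the TV TQFT gives the same equivariant surjection, as the map induced by capping with the empty-labelled disk.
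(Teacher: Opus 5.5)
Your proposal is correct and follows essentially the same route as the paper: surjectivity of $\kappa$ via the disc theorem, surjectivity of $\iota_*$ by isotoping graphs off $D_1$, and the commutative square $\rho^{\CC}_{\Sigma_g}(\kappa(f))\circ\iota_*=\iota_*\circ\rho^{\CC}_{\Sigma_g^1}(f)$. Your Step 3 simply spells out the ``functoriality'' conclusion that the paper leaves implicit: since $\iota_*$ is surjective, $\kappa$ carries $\ker\rho^{\CC}_{\Sigma_g^1}$ into $\ker\rho^{\CC}_{\Sigma_g}$, so the image of $\rho^{\CC}_{\Sigma_g}$ is a quotient of the finite image of $\rho^{\CC}_{\Sigma_g^1}$.
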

\begin{proof}
Any two orientation-preserving embeddings of disk in $\Sigma_g$ are ambiently isotopic by disc theorem, so one can isotope a homeomorphism on $\Sigma_{g}$ so that it is the identity on $D_1$, hence $\kappa$ is surjective.  One can also isotope every colored graph in $\Sigma_g$ so that it is disjoint from $D_1$, hence $\iota_*$ is also surjective. Finite image on $\SN_{\CC}(\Sigma_g)$ follows immediately from functorality: 
\[
\begin{tikzcd}[column sep=large, row sep=large]
\Sigma_{g}^1 \arrow[r, hook, "\iota"] \arrow[d, "f"'] & \Sigma_g \arrow[d, "\kappa(f)"] \\
\Sigma_{g}^1 \arrow[r, hook, "\iota"'] & \Sigma_g
\end{tikzcd}
\qquad\Longrightarrow\qquad
\begin{tikzcd}[column sep=large, row sep=large]
\SN_{\CC}(\Sigma_{g}^1) \arrow[r, "\iota_*"] \arrow[d, "\rho_{\Sigma_{g}^1}^\CC(f)"'] 
  & \SN_{\CC}(\Sigma_g) \arrow[d, "\rho_{\Sigma_g}^{\CC}(\kappa(f))"] \\
\SN_{\CC}(\Sigma_{g}^1) \arrow[r, "\iota_*"'] 
  & \SN_{\CC}(\Sigma_g)
\end{tikzcd}
\]
\end{proof}

\subsection{Reshetikhin-Turaev TQFT and its relation to Turaev-Viro TQFT}\label{subsec:rt}
We refer to \cite{RT91, Tur10} for the definition of Reshetikhin-Turaev TQFT and its mapping class group actions. 
Let $\CC$ be a modular fusion category over $\BC$. Put
\[
L=\bigoplus_{V\in\Irr(\CC)}V\otimes V^*,
\qquad
X^+=X,\quad X^-=X^*.
\]
Equip $\Sigma^{\mathrm{fr}}_{g,n}$ with labels $\boldsymbol{X}=(X_1,\ldots,X_n)\in \Obj(\CC)^n$ and signs $\boldsymbol\varepsilon=(\varepsilon_1,\ldots,\varepsilon_n)$.
Its algebraic Reshetikhin-Turaev state space is
\[
\RT_{\CC}
\bigl(\Sigma^{\mathrm{fr}}_{g,n};\boldsymbol{X},\boldsymbol\varepsilon\bigr)
=\operatorname{Hom}_{\CC}\left(\mathbf1,\,X_1^{\varepsilon_1}\otimes\cdots\otimes X_n^{\varepsilon_n}\otimes L^{\otimes g}\right).
\]
The state space of a surface $\Sigma^n_g$ with parametrized boundary circles, together with its homeomorphism
operators, is defined using the corresponding capped surface and capped homeomorphisms \cite[Thm.~V.4.1 and Secs.~V.4.2--V.4.3]{Tur10}. We therefore use the same notation $\RT_{\CC}$ in both descriptions.
Write $\Sigma$ for the resulting decorated surface, suppressing the labels and signs when convenient. $\Mod(\Sigma)$ denotes its label- and sign-preserving framed mapping class group. The mapping-cylinder operator gives a projective representation
\[
\overline\rho^{\CC}_\Sigma: \Mod(\Sigma)\longrightarrow
\operatorname{PGL}\bigl(\RT_{\CC}(\Sigma)\bigr).
\] 
We write $Z^{\CC}_\Sigma(f)$ for a chosen mapping-cylinder operator representing the projective class $\overline\rho^{\CC}_\Sigma(f)$.
The RT construction uses auxiliary Lagrangian subspaces and integer weights to obtain a linear representation $\widehat\rho^{\CC}_\Sigma$ of the integer central extension
\[
1\longrightarrow\mathbb{Z}\longrightarrow\widehat{\Mod}(\Sigma)\longrightarrow\Mod(\Sigma)\longrightarrow 1,
\]
whose central weight subgroup acts by powers of a fixed root of unity. Changing these choices does not change the projective representation up to equivalence \cite[Sec.~IV.6.3]{Tur10}. We suppress them throughout, choosing the Lagrangians compatibly with the gluing under consideration. Mapping classes are not required to preserve these Lagrangians, and identities between RT operators are understood up to a root of unity. When $\AA=\ZZ(\CC)$, equipped with the induced ribbon structure, we use the
standard anomaly-free normalization \cite{TV17}. The central weight then acts
trivially, so
\[
Z^{\ZZ(\CC)}_\Sigma(f)=\widehat{\rho}^{\ZZ(\CC)}_\Sigma(\widehat f)
\]
is independent of the choice of lift $\widehat f\in \widehat{\Mod}(\Sigma)$ and defines a
linear representation
\[Z^{\ZZ(\CC)}_\Sigma:\Mod(\Sigma)\longrightarrow\operatorname{GL}\bigl(\operatorname{RT}_{\ZZ(\CC)}(\Sigma)\bigr).
\]
We retain $\overline{\rho}^{\ZZ(\CC)}_\Sigma$ for its projectivization;
thus
\[
\overline{\rho}^{\ZZ(\CC)}_\Sigma(f)= \bigl[Z^{\ZZ(\CC)}_\Sigma(f)\bigr].
\]

Let $\Sigma'$ be obtained from $\Sigma$ by identifying two boundary components with equal object labels $X$ and opposite signs, using an orientation-reversing identification preserving their base points. With compatible auxiliary Lagrangians, this determines a \emph{gluing homomorphism}
\[
G_X:\RT_{\CC}(\Sigma;X,X^*)\longrightarrow \RT_{\CC}(\Sigma')
\]
\cite[Axiom~V.1.5.4]{Tur10}.
If $f\in \Mod(\Sigma)$ induces $f'\in \Mod(\Sigma')$ after gluing, then up to a root of unity,
\begin{equation}\label{eq:Functorility}
Z^{\CC}_{\Sigma'}(f')G_X
=
G_X Z^{\CC}_{\Sigma}(f).   
\end{equation}
Naturality for weak homeomorphisms, commutativity of gluings along disjoint boundary pairs, and compatibility with disjoint unions are verified in \cite[Sec.~V.4.6]{Tur10}.

The \emph{splitting axiom} states that
\[
G=\bigoplus_{V\in \Irr(\CC)} G_V:
\bigoplus_{V\in \Irr(\CC)}\RT_{\CC}(\Sigma; V,V^*)
\xrightarrow{\sim}\RT_{\CC}(\Sigma'),
\]
is an isomorphism
\cite[Axiom~V.1.6.3, Secs.~V.5.9--V.5.10]{Tur10}.

Finally, disjoint union and RT duality give
\[
\RT_{\CC}(\Sigma_1\sqcup\Sigma_2)\cong \RT_{\CC}(\Sigma_1)\otimes \RT_{\CC}(\Sigma_2), \qquad \RT_{\CC}(\overline{\Sigma})\cong \RT_{\CC}(\Sigma)^*.
\]
Here $\overline{\Sigma}$ reverses the orientation, framed tangent vectors, and signs, while retaining the object labels, see \cite[Sec.~2.1, Eq.~(2.15)]{RR24}.

\iffalse
For a modular fusion category $\AA$, write $\RT_\AA(\Sigma,V)$ for
its state space with parametrized boundary circles labelled by
$V=(V_1,\ldots,V_b)$, where $V_j\in\AA$. We use the projective
mapping class group actions and their standard RT linearizations
on the extended mapping class groups. A framed marked point,
equipped with a nonzero tangent vector and an object label, may
equivalently be replaced by a labelled boundary circle
\cite[Section~2.1]{RR24}.
\fi

\begin{lem}\label{lem:rt-scalar-control}
 Let $\CC$ be modular and $\Sigma$ be a decorated surface. The RT representation $\overline{\rho}_{\Sigma}^\CC:\Mod(\Sigma)\to\mathrm{PGL}(\RT_\CC(\Sigma))$ has finite image if and only if its linear extension $\widehat{\rho}_{\Sigma}^\CC:\widehat{\Mod}(\Sigma)\to\mathrm{GL}(\RT_\CC(\Sigma))$ has finite image. 
\end{lem}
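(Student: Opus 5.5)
We prove the two implications separately. Throughout we use the facts recalled in Section~\ref{subsec:rt}: $\widehat{\rho}^{\CC}_\Sigma$ is a linear lift of $\overline{\rho}^{\CC}_\Sigma$ along $\widehat{\Mod}(\Sigma)\to\Mod(\Sigma)$, and the central weight subgroup $\mathbb{Z}\subset\widehat{\Mod}(\Sigma)$ acts by powers of a fixed root of unity $\zeta$.

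The direction ``linear finite $\Rightarrow$ projective finite'' is formal. Composing $\widehat{\rho}^{\CC}_\Sigma$ with the projection $\GL(\RT_\CC(\Sigma))\to\operatorname{PGL}(\RT_\CC(\Sigma))$ gives $\overline{\rho}^{\CC}_\Sigma$ precomposed with the surjection $\widehat{\Mod}(\Sigma)\to\Mod(\Sigma)$. Hence the image of $\overline{\rho}^{\CC}_\Sigma$ is the image of a finite group under a homomorphism, and is therefore finite.

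For the converse, let $\widehat{G}=\widehat{\rho}^{\CC}_\Sigma(\widehat{\Mod}(\Sigma))$, set $V=\RT_\CC(\Sigma)$, and let $N=\dim V$.
\begin{enumerate}
\item Let $\pi:\GL(V)\to\operatorname{PGL}(V)$ be the projection. Then $\pi(\widehat{G})$ equals the image of $\overline{\rho}^{\CC}_\Sigma$, which is finite by hypothesis, and $\widehat{G}\cap\ker\pi=\widehat{G}\cap\BC^\times\mathrm{id}$.
\item It therefore suffices to show that the scalar subgroup $\widehat{G}\cap\BC^\times\mathrm{id}$ is finite. A direct attempt via the central weight subgroup is not enough, since that subgroup need not exhaust the scalars in $\widehat{G}$.
\item Instead we use the structure of the group. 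Since $\Mod(\Sigma)$ is finitely generated and $\mathbb{Z}$ is finitely generated, $\widehat{\Mod}(\Sigma)$ is finitely generated. The Dehn twists generating $\Mod(\Sigma)$ are all conjugate within finitely many conjugacy classes of nonseparating and separating curves. Their RT operators are conjugate, up to powers of $\zeta$, to twist operators, which are diagonal with eigenvalues given by the twists $\theta_V$. These are roots of unity by Vafa's theorem.
\item Put $K=\widehat{G}\cap\SL(V)\cdot\mu_\infty$, and take the determinant. For any $g\in\widehat{G}$, the element $\det g$ is a product of determinants of the generator lifts. Each generator lift is a lift of a Dehn twist composed with a central weight element, so its determinant is a root of unity. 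Hence $\det(\widehat{G})$ is a finitely generated group of roots of unity, and so it is finite, of some order $M$.
\item A scalar $\lambda\,\mathrm{id}\in\widehat{G}$ satisfies $\lambda^{N}=\det(\lambda\,\mathrm{id})$, which is an $M$-th root of unity. So $\lambda^{NM}=1$, and the scalar subgroup is contained in $\mu_{NM}$, which is finite.
\item Combining the finite scalar subgroup with the finite quotient $\pi(\widehat{G})$ shows that $\widehat{G}$ is finite.
\end{enumerate}

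The main obstacle is the determinant step (4). Rather than using generators, I would get the needed finiteness uniformly from the presentation of $\widehat{\Mod}(\Sigma)$ by Dehn twists together with the central generator, which is the approach Turaev takes in \cite[Ch.~IV]{Tur10}. The key input is that every generator is sent to an operator with root-of-unity eigenvalues: twists act by $\theta$'s, and the central generator acts by a power of $\zeta$. In the presence of labeled framed points, the boundary twists also act diagonally by $\theta_{X_i}$, so the argument goes through unchanged.
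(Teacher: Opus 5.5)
Your strategy matches the paper's: you bound the scalar subgroup of the linear image by determinants, using that generators act with root-of-unity eigenvalues (Vafa), and combine this with the finite projective image. The gap is in steps (3)--(4). They rest on the claim that $\Mod(\Sigma)$ is generated by Dehn twists, and this is false for the group in the lemma.

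For a decorated surface, $\Mod(\Sigma)$ is the label- and sign-preserving framed mapping class group. It can therefore permute framed points that carry equal labels and signs, for example on a sphere with three points all labeled $X$. Every Dehn twist, including the twists about framed points, lies in the pure subgroup $\PMod(\Sigma)$. So whenever two points share a label and sign, you also need half-twists interchanging them as generators. Your argument never shows that their lifts have root-of-unity determinant. Your closing remark that boundary twists act by $\theta_{X_i}$ only covers pure elements.

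The repair is short. The paper first restricts to the preimage $\widehat{\PMod}(\Sigma)$. This group is generated by the central weight subgroup and lifts of finitely many Dehn twists, and your determinant argument works there verbatim. The paper then concludes using that $\widehat{\PMod}(\Sigma)$ has finite index in $\widehat{\Mod}(\Sigma)$.

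Alternatively, you can keep working with all of $\widehat{\Mod}(\Sigma)$. Since $\widehat{\PMod}(\Sigma)$ is normal of finite index, every $g$ has a power $g^k\in\widehat{\PMod}(\Sigma)$. Then $\det\widehat{\rho}^{\CC}_\Sigma(g)^k$ is a root of unity, hence so is $\det\widehat{\rho}^{\CC}_\Sigma(g)$. Applying this to a finite generating set gives your step (4) without any claim about Dehn-twist generation. (The set $K$ you introduce in step (4) is never used and can be dropped.)
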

\begin{proof}
Suppose $\overline{\rho}_{\Sigma}^\CC$ has finite image and $\Sigma=\Sigma_{g,n}^{\mathrm{fr}}$ (the case of $\Sigma_{g}^n$ is similar). Assume $\RT_\CC(\Sigma)$ is nonzero. Let $\widehat{\PMod}(\Sigma)$ the preimage of $\PMod(\Sigma)$ in $\widehat{\Mod}(\Sigma)$. Since $\PMod(\Sigma)$ is generated by finitely many Dehn-twists, $\widehat{\rho}_{\Sigma}^\CC\left(\widehat{\PMod}(\Sigma)\right)$ is generated by images of the anomaly factors and lifts of finitely many Dehn twists. These have finite order by \cite{Vafa88} also see \cite[Secs.~2.1--2.2]{RR24} and \cite[Cor.~8.18.2]{EGNO}. Consequently, their determinants generate a finite subgroup of \(\mathbb C^\times\). Since \[\det(\lambda\id)=\lambda^{\dim \RT_\CC(\Sigma)},\]
its subgroup of scalar matrices is finite.  Therefore $\widehat{\rho}_{\Sigma}^\CC\left(\widehat{\PMod}(\Sigma)\right)$ has both finite scalar subgroup and finite projective image, and hence is finite. But \(\widehat{\PMod}(\Sigma)\) has finite index in \(\widehat{\Mod}(\Sigma)\), as \(\PMod(\Sigma)\) has finite index in \(\Mod(\Sigma)\), it follows that $\widehat{\rho}_{\Sigma}^\CC$ has finite image. The converse is trivial.
\end{proof}

By preceding lemma, we say a modular category $\CC$ has \emph{property F} for $\Sigma$ if either $\overline{\rho}_{\Sigma}^\CC$ or $\widehat{\rho}_{\Sigma}^\CC$ has finite image, and we say $\CC$ has \emph{generalized property F} if it has property F for every decorated surface.

\begin{lem}\label{lem:rt-boundary-reduction}
Let $\CC$ be modular, $g\geq0$, and $n\geq1$. If $\CC$ has property F for $\Sigma_{2g+n-1}$, then it also has property F for $\Sigma^{\mathrm{fr}}_{g,n}$ and $\Sigma_{g}^n$.
\end{lem}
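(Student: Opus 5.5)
We reduce to the doubled closed surface: glue two copies of $\Sigma_g^n$, one with reversed orientation, along all $n$ boundary circles. The doubled surface $\Sigma'=\Sigma_g^n\cup_\partial\overline{\Sigma_g^n}$ is closed of genus $2g+n-1$. It suffices to treat $\Sigma_g^n$ with arbitrary boundary labels. The statement for $\Sigma^{\mathrm{fr}}_{g,n}$ then follows, because its state space and operators are defined through the capped surface (the identification \eqref{eq:capping} and \cite[Sec.~V.4]{Tur10}). Moreover, $\PMod(\Sigma^{\mathrm{fr}}_{g,n})$ has finite index in $\Mod(\Sigma^{\mathrm{fr}}_{g,n})$, so finiteness on the pure part implies finiteness in general. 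Label-preserving mapping classes permuting marked points only permute the labels among equal labels. Finally, by Lemma~\ref{lem:rt-scalar-control} it is enough to control projective images, or linear images up to roots of unity.

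\textbf{Step 1: set up the gluing.} First apply disjoint union and duality:
\[
\RT_\CC(\Sigma_g^n;\boldsymbol X)\otimes\RT_\CC(\overline{\Sigma_g^n};\boldsymbol X)\cong \RT_\CC(\Sigma_g^n;\boldsymbol X)\otimes \RT_\CC(\Sigma_g^n;\boldsymbol X)^*.
\]
Gluing the $n$ pairs of boundary circles one at a time, the splitting axiom gives an isomorphism
\[
\bigoplus_{\boldsymbol V\in\Irr(\CC)^n}\RT_\CC(\Sigma_g^n;\boldsymbol V)\otimes\RT_\CC(\overline{\Sigma_g^n};\boldsymbol V)\xrightarrow{\ \sim\ }\RT_\CC(\Sigma').
\]
The labelled boundary data $\boldsymbol X$ with simple labels therefore appears as one summand, embedded injectively into $\RT_\CC(\Sigma')$ via $G$. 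We may take the labels simple: a general label decomposes into simple summands, and the state space splits accordingly, equivariantly for pure mapping classes, so finiteness for each simple block suffices.

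\textbf{Step 2: transport the action.} For $f\in\Mod(\Sigma_g^n)$, boundaries are fixed pointwise, so $f\sqcup\mathrm{id}$ induces a mapping class $f'$ of $\Sigma'$. Then \eqref{eq:Functorility}, applied repeatedly together with commutativity of gluings, gives, up to roots of unity,
\[
Z^\CC_{\Sigma'}(f')\,G=G\,\bigl(Z^\CC_{\Sigma_g^n}(f)\otimes\id\bigr)
\]
on the summand with labels $\boldsymbol X$. Finite projective image of $\Mod(\Sigma')$ implies that the elements $Z_{\Sigma'}(f')$ range over finitely many operators up to scalars. Restricting to the invariant summand, the operators $Z_{\Sigma_g^n}(f)\otimes\id_W$ with $W\neq0$ take only finitely many values up to scalars. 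Since $A\otimes\id_W$ determines $A$ up to scalar, the projective image of $\Mod(\Sigma_g^n)$ is finite.

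\textbf{Main obstacle.} The delicate point is the bookkeeping of scalars and framings. The root-of-unity ambiguities in \eqref{eq:Functorility} must not accumulate into infinitely many scalars. This is harmless in projective language, and Lemma~\ref{lem:rt-scalar-control} converts the conclusion back to the linear statement. One must also check that the glued surface really is the framed, parametrized closed surface $\Sigma_{2g+n-1}$ with compatible Lagrangians, and that boundary twists of $\Sigma_g^n$ act by the expected twist scalars. I would handle this by choosing Lagrangians compatible with the gluing, as the paper stipulates, and by working throughout in $\mathrm{PGL}$.
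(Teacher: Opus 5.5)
Your proposal is correct and follows essentially the same route as the paper: reduce to $\Sigma_g^n$ via capping and the finite index of $\PMod(\Sigma^{\mathrm{fr}}_{g,n})$, double to the closed genus-$(2g+n-1)$ surface, use splitting and duality to embed $\RT_\CC(\Sigma_g^n;\boldsymbol V)$ equivariantly up to roots of unity via naturality of gluing, and read off finite projective image. The only difference is cosmetic: the paper picks a nonzero vector in the dual factor where you use $A\otimes\id_W$. One point should be made explicit in the non-simple-label step: first upgrade each simple block to finite linear image of the extended group via Lemma~\ref{lem:rt-scalar-control}, and only then sum the blocks, since finiteness of projective images is not additive over direct sums; this is exactly how the paper concludes.
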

\begin{proof}
It suffices to prove finiteness for the representation $\overline\rho^{\CC}_{\Sigma^{n}_{g}}$, since it is identified with restriction of $\overline\rho_{\Sigma^{\mathrm{fr}}_{g,n}}^\CC$ to $\PMod(\Sigma^{\mathrm{fr}}_{g,n})$ by capping and $\PMod(\Sigma^{\mathrm{fr}}_{g,n})$ is a finite index subgroup of $\Mod(\Sigma^{\mathrm{fr}}_{g,n})$. Put $\Sigma=\Sigma_{g}^n$ and let
$D\Sigma=\Sigma\cup_{\partial\Sigma}\overline\Sigma$ be its double. This is a
closed surface of genus $2g+n-1$. The splitting and duality axioms give
\begin{equation}\label{eq:splitting}
\RT_\CC(D\Sigma)\cong\bigoplus_{\boldsymbol{V}\in\Irr(\CC)^n}\RT_\CC(\Sigma,\boldsymbol{V})\otimes\RT_\CC(\Sigma,\boldsymbol{V})^*.
\end{equation}
Each mapping class $f\in \Mod(\Sigma)$ extends by the identity on $\overline\Sigma$ to a class $\iota(f)\in \Mod(D\Sigma)$. Each summand in \eqref{eq:splitting} gives an injection (by choosing a nonzero vector in $\RT_\CC(\Sigma,\boldsymbol{V})^*$)
\[
\iota_{\boldsymbol{V}}:\RT_{\CC}(\Sigma,\boldsymbol{V})\hookrightarrow\RT_\CC(\Sigma,\boldsymbol{V})\otimes\RT_\CC(\Sigma,\boldsymbol{V})^*\hookrightarrow \RT_\CC(D\Sigma).
\]
Naturality of gluing \eqref{eq:Functorility} gives the following equality up to a root of unity 
\[
Z_{D\Sigma}(\iota(f))\iota_{\boldsymbol{V}}=\iota_{\boldsymbol{V}}Z_{\Sigma}(f).
\]
Therefore, finiteness of projective image for $\Mod(\Sigma_{2g+n-1})$ representation implies finiteness of projective image for $\PMod(\Sigma^{\mathrm{fr}}_{g,n})$ with the labels $\boldsymbol{V}\in \Irr(\CC)^n$. Lemma~\ref{lem:rt-scalar-control} gives finite extended linear image. For non-simple boundary labels, additivity gives a finite direct sum of these representations of the same extended group, with trivial
action on the multiplicity spaces. Its image is again finite.
\end{proof}

Finally, we state the theorem relating the two theories. 
\begin{Thm}[Turaev--Virelizier--Balsam\cite{TuraevVirelizier2010,Balsam2010III}]
\label{thm:tvrt}
Let $\CC$ be a spherical fusion category and $I_\CC:\CC\to\ZZ(\CC)$ be the adjoint to the forgetful functor. There are natural isomorphisms of mapping class group representations
\begin{align*}&\TV_\CC(\Sigma_g)\cong\RT_{\ZZ(\CC)}(\Sigma_g),\\&\TV_\CC(\Sigma_g^1)\cong\RT_{\ZZ(\CC)}(\Sigma_g^1;I_\CC(\one)).    
\end{align*}
\end{Thm}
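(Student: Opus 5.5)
This theorem is quoted from Turaev--Virelizier and Balsam rather than proved here, so the plan is to reconstruct the comparison in stages. The strategy is to realize both sides as state spaces of $3$-dimensional TQFTs and identify the TQFTs themselves.

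\textbf{Closed surfaces.} First I would show that the Turaev--Viro state-sum TQFT $\TV_\CC$ and the Reshetikhin--Turaev TQFT $\RT_{\ZZ(\CC)}$ agree as TQFTs on closed $3$-manifolds and cobordisms. The route I would follow is Turaev--Virelizier's: pass to skeletons (or Heegaard diagrams) of a $3$-manifold, and rewrite the state sum using the coend $L$ of $\ZZ(\CC)$ together with its Hopf-algebra structure. This exhibits the state sum as the Reshetikhin--Turaev surgery invariant. The normalizations must match, and this holds because $\dim\ZZ(\CC)=\dim(\CC)^2$ and $\ZZ(\CC)$ is anomaly-free.

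Once the closed invariants and the gluing data agree, I would argue as follows. Both theories are non-degenerate TQFTs, each generated by its handlebody vectors. An equality of all pairings between such vectors therefore yields a canonical isomorphism $\TV_\CC(\Sigma_g)\cong\RT_{\ZZ(\CC)}(\Sigma_g)$. This isomorphism intertwines mapping cylinders, so it is an isomorphism of $\Mod(\Sigma_g)$-representations, linear because the anomaly is trivial.

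\textbf{Surface with one boundary circle.} Next I would treat $\Sigma_g^1$ with empty boundary value. Cap the boundary circle and reinterpret the string-net/TV space with an open boundary. Via Theorem~\ref{Thm:SNTV}, this space is obtained by gluing a tube region. In the extended theory, the vector that the empty boundary value assigns to the circle is the image of $\one$ under the induction $I_\CC$. Equivalently, as an object of $\ZZ(\CC)$, the TV circle category is $\ZZ(\CC)$ itself, and the empty boundary value corresponds to $I_\CC(\one)=\bigoplus_{V}V\otimes V^*$ with its canonical half-braiding.

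I would then apply the splitting axiom together with the closed-surface case for the capped surface. This gives $\TV_\CC(\Sigma_g^1)\cong\bigoplus_V\RT_{\ZZ(\CC)}(\Sigma_g^1;V)\otimes\Hom(V,I_\CC(\one))=\RT_{\ZZ(\CC)}(\Sigma_g^1;I_\CC(\one))$. It is equivariant for boundary-fixing mapping classes, because those classes act trivially on the boundary label.

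\textbf{Main obstacle.} The hard step is the identification of the closed-manifold invariants, i.e.\ the equality $\TV_\CC=\RT_{\ZZ(\CC)}$ on closed $3$-manifolds. Here I would follow Turaev--Virelizier's coend computation, or Balsam's extended-TQFT comparison via Kirillov's string nets, and not attempt anything new.
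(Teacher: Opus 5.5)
The paper does not prove this statement; it imports it from Turaev--Virelizier and Balsam, so deferring to them is the intended route. Two steps of your reconstruction, however, fail as written.

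\textbf{The closed case.} The pairing argument only yields an isomorphism on the span of the vectors being paired, and vectors of undecorated $3$-manifolds need not span. Take $\CC=\Vect_{\Zn3}$. The vector of any $M$ with $\partial M=\Sigma_1$ is, up to normalization, the function on $\Hom(\pi_1(\Sigma_1),\Zn3)$ counting extensions to $\pi_1(M)$. It is therefore fixed by the involution induced by $x\mapsto -x$, whose fixed subspace in $\TV_\CC(\Sigma_1)\cong\BC^9$ is only $5$-dimensional. So equality of invariants of plain closed $3$-manifolds is not enough. The spanning vectors are handlebodies containing $\ZZ(\CC)$-colored ribbon graphs. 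To pair these on the Turaev--Viro side, you must first extend the state sum to $3$-manifolds with embedded $\ZZ(\CC)$-colored graphs, and then prove $\TV_\CC=\RT_{\ZZ(\CC)}$ for these decorated closed manifolds. That extension is the central construction of Turaev--Virelizier, and your sketch omits it.

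\textbf{The one-boundary case.} This isomorphism cannot come from the closed case plus splitting. Capping $\partial\Sigma_g^1$ with an undecorated disk retains only the trivial label, $\RT_{\ZZ(\CC)}(\Sigma_g)\cong\RT_{\ZZ(\CC)}(\Sigma_g^1;\one)$. The closed isomorphism therefore says nothing about the nontrivial simple objects of $\ZZ(\CC)$ occurring in $\TV_\CC(\Sigma_g^1)$. For instance, with $\CC=\Vect_{S_3}$, the spaces $\TV_\CC(\Sigma_1^1)$ and $\TV_\CC(\Sigma_1)$ have dimensions $18$ and $8$.

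What you need instead is the comparison for surfaces with a boundary circle, namely Balsam's isomorphism of extended TQFTs. Combine it with the equivalence between the Karoubi completion of the annular category and $\ZZ(\CC)$, which sends a boundary value $X$ to $I_\CC(X)$. The empty boundary value is the object $\one$ of the annular category, so it goes to $I_\CC(\one)$. This is consistent with $\Hom_{\mathrm{Ann}}(\one,\one)\cong\bigoplus_{V\in\Irr(\CC)}\operatorname{End}_\CC(V)\cong\operatorname{End}_{\ZZ(\CC)}(I_\CC(\one))$. With this in hand, the second isomorphism and its equivariance for boundary-fixing mapping classes follow from naturality of the extended isomorphism alone, without using the closed case.
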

Combining Theorem \ref{Thm:SNTV}, Lemma \ref{lem:puncture-filling}, Lemma \ref{lem:rt-boundary-reduction} and Theorem \ref{thm:tvrt}, we have
\begin{Cor}\label{cor:SNRT}
    Let \(\CC\) be a spherical fusion category. If the representation \[\rho_{\Sigma_{g}^1}^\CC:\Mod(\Sigma_{g}^1)\to \GL(\SN_\CC(\Sigma_{g}^1))\] has finite image for every genus $g$, then $\ZZ(\CC)$ has property F for arbitrary compact surface $\Sigma$. 
\end{Cor}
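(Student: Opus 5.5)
The plan is to chain the four cited results, working from closed surfaces to arbitrary decorated surfaces. Throughout we use the anomaly-free normalization for $\ZZ(\CC)$. There the RT operators $Z^{\ZZ(\CC)}_\Sigma$ form a genuine linear representation, so by Lemma~\ref{lem:rt-scalar-control} it suffices to show finite (projective or linear) image on each decorated surface.

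\emph{Step 1: closed surfaces.} Fix $g\geq0$. By hypothesis $\rho^{\CC}_{\Sigma^1_g}$ has finite image. Lemma~\ref{lem:puncture-filling} then gives that $\rho^{\CC}_{\Sigma_g}$ has finite image on $\SN_\CC(\Sigma_g)$. Theorem~\ref{Thm:SNTV} identifies this representation with the TV representation on $\TV_\CC(\Sigma_g)$. Theorem~\ref{thm:tvrt} identifies the latter with the RT representation of $\ZZ(\CC)$ on $\RT_{\ZZ(\CC)}(\Sigma_g)$. Both identifications are isomorphisms of $\Mod(\Sigma_g)$-representations, so $Z^{\ZZ(\CC)}_{\Sigma_g}$ has finite image. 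Hence $\ZZ(\CC)$ has property F for every closed surface $\Sigma_g$.

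\emph{Step 2: surfaces with marked points or boundary.} Let $\Sigma$ be a decorated surface of type $\Sigma^{\mathrm{fr}}_{g,n}$ or $\Sigma^n_g$ with $n\geq1$. Since $g$ was arbitrary in Step 1, $\ZZ(\CC)$ has property F for the closed surface $\Sigma_{2g+n-1}$. Lemma~\ref{lem:rt-boundary-reduction}, applied to the modular category $\ZZ(\CC)$, then gives property F for $\Sigma^{\mathrm{fr}}_{g,n}$ and $\Sigma^n_g$ with arbitrary labels. Non-simple labels are already handled inside that lemma by additivity, and the passage from $\PMod$ to $\Mod$ is handled by the finite-index argument there. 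Every compact decorated surface is of one of these forms, so the corollary follows.

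\emph{Main obstacle.} The only delicate point is Step 1. We must make sure that the isomorphisms of Theorem~\ref{Thm:SNTV} and Theorem~\ref{thm:tvrt} intertwine the actions exactly, or at least up to scalars, so that finiteness transfers. With the anomaly-free normalization they are equivariant linear isomorphisms. Even if they held only projectively, Lemma~\ref{lem:rt-scalar-control} recovers linear finiteness, so I would simply cite that lemma to absorb any scalar ambiguity.
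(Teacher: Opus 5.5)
Your proposal is correct and follows the paper's argument: the paper also chains Lemma~\ref{lem:puncture-filling}, Theorem~\ref{Thm:SNTV} and the closed-surface case of Theorem~\ref{thm:tvrt} to obtain property F for every closed $\Sigma_g$, and then applies Lemma~\ref{lem:rt-boundary-reduction} to $\ZZ(\CC)$ for surfaces with framed points or boundary. Your appeal to Lemma~\ref{lem:rt-scalar-control} to absorb possible scalar ambiguity is harmless but unnecessary. An isomorphism that intertwines the actions only up to scalars already transfers finiteness of the projective image, and that is all property F requires.
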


\section{Canonical decomposition of string-net spaces}\label{sec:string-net}
In this section we compute the string-net space when the category is $G$-graded. We first recall the canonical decomposition of string-net space into blocks permuted by mapping class groups (Lemma \ref{lem:blockdecomposition}), then we express the string-net space of surface with nonempty boundary as a direct sum of Hom-spaces of $\CC$ (Lemma \ref{lem:spine-isomorphism}), which allows us to show that the fiber product of the underlying category induces the blockwise tensor product action of the mapping class group (Theorem \ref{thm:blockwise-tensor-power}). For our purpose we restrict to string-nets with empty boundary value, though the result in this section can be easily extended to the general case. 

Let $G$ be a finite group and $\CC=\bigoplus_{k\in G} \CC_k$ be a $G$-graded spherical fusion category. A $\CC$-colored graph $\Gamma$ in $\Sigma$ is \emph{homogeneous} if each edge label is homogeneous. Note that for each vertex $v$ of a homogeneous graph with half-edge labels $X_{h_1}, \cdots, X_{h_m}$, 
\[
\varphi_v\in \Hom_\CC(1,X_{h_1}\otimes \cdots\otimes X_{h_m})
\]
is nonzero only if $\deg(X_{h_1})\cdots \deg(X_{h_m})=e\in G$. We call such graphs \emph{balanced}.
 
A balanced homogeneous graph $\Gamma$ gives a class in \([(\Sigma,\partial \Sigma),(BG,*)]\) or equivalently an isomorphism class of principal \(G\)-bundles equipped with a trivialization over \(\partial\Sigma\) \cite{TuraevVirelizier2012,CGP2023}. For our purpose, we consider the case when $\partial\Sigma\neq \varnothing$. Fix a set of basepoints $Q=\{q_1,\cdots,q_n\}\subset \partial \Sigma$ at the boundary (one for each component), with $\partial_j$ boundary loops based at $q_j$, then  
\[
[(\Sigma,\partial \Sigma),(BG,*)]
% \cong[\Pi_1\Sigma,\bb G]_V
\cong\HH(\Sigma;G):=\{\sigma:\Pi_1(\Sigma,Q)\to \bb G\mid\sigma(\partial _j)=e\}
\]where $\Pi_1(\Sigma,Q)$ is the fundamental groupoid restricted to $Q$ and $\bb G$ is the one-object groupoid with automorphism group $G$. Thus we can also consider the transverse intersection of $\Gamma$ (or holonomy in the language of principal bundles) with a curve $\gamma$ with endpoints in $Q$: \[
\mathrm{\sigma}_{\Gamma}(\gamma)=\prod_{p\in \Gamma\cap \gamma}(\deg X_{p})^{\epsilon(p)}
\]
 where $X_p$ is the edge label at the crossing $p$, $\epsilon(p)=\pm 1$ is the sign of the intersections and the product is taken in the order in which the curve $\gamma$ meets the graph $\Gamma$. A generic homotopy of the curve is generated by three types of moves: isotopy through transverse paths, tangency and vertex crossing. The first move preserves the items in the product, the second inserts or removes an adjacent pair $g,g^{-1}$, while the third move leaves the product unchanged because the vertex is balanced. As a consequence, \(\sigma_\Gamma\) depends only on the homotopy class of $\gamma$ and we have a well-defined morphism $\mathrm{\sigma}_{\Gamma}:\Pi_1(\Sigma,Q)\to\bb G$. 
   
\begin{figure}[htbp]
  \centering \input{moves.tex}
  \label{fig:moves}
\end{figure}

% The index set here is exactly the relative non-abelian Čech cohomology set \(
% \HH(\Sigma;G)\cong\check H^1(\Sigma,\partial\Sigma;\underline{G})\). In particular, $\HH(\Sigma_{g,1};G)=\{\sigma\in\Hom(\pi_1(\Sigma_{g,1},q_1),G):\sigma(\partial_1)=e\}\cong\Hom(\pi_1(\Sigma_g),G)$ as a set.

The mapping class group $\Mod(\Sigma)$ acts naturally on $\HH(\Sigma;G)$ by 
\[
  f\cdot\sigma=\sigma\circ f_*^{-1},f\in\Mod(\Sigma),\sigma\in \HH(\Sigma;G).
\]We have \(f\cdot\sigma_{\Gamma}=\sigma_{f(\Gamma)}\).

\begin{lem}\label{lem:blockdecomposition}
 Let $\Sigma$ be a compact surface with nonempty boundary and $\SN_{\CC}(\Sigma)_{\sigma}$ be the subspace of $\SN_{\CC}(\Sigma)$ spanned by the classes of balanced homogeneous colored graphs $\Gamma$ such that $\mathrm{\sigma}_{\Gamma}=\sigma$. 
Then there is a canonical decomposition:
\[
      \SN_{\CC}(\Sigma)=\bigoplus_{\sigma\in\HH(\Sigma;G)}\SN_{\CC}(\Sigma)_{\sigma}
\]with the action of $f\in\Mod(\Sigma)$ permuting the summands by $\rho_{\Sigma}^\CC(f)(\SN_{\CC}(\Sigma)_{\sigma})=\SN_{\CC}(\Sigma)_{f\cdot\sigma}$.
\end{lem}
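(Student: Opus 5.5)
We will prove the decomposition in three steps: spanning, independence of the sum, and equivariance of the action.

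\emph{Spanning.} Every colored graph is equivalent in $\SN_{\CC}(\Sigma)$ to a linear combination of balanced homogeneous graphs. To see this, decompose each edge label $X=\bigoplus_k X_k$ into its homogeneous components. Inside a small disk around an edge this amounts to inserting $\id_X=\sum_k \iota_k\pi_k$, which is a null-combination relation. Then expand multilinearly in the vertex morphisms. A vertex whose incident degrees do not multiply to $e$ carries a zero morphism, so after this expansion only balanced homogeneous graphs survive. Hence the subspaces $\SN_{\CC}(\Sigma)_\sigma$ together span $\SN_{\CC}(\Sigma)$.

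\emph{Independence.} We must show the sum is direct. We will check that the null space $N_{\CC}(\Sigma)$ is compatible with the grading. Let $\mathrm{Graph}^{\mathrm{bh}}_{\CC}(\Sigma)_\sigma$ be the span of balanced homogeneous graphs with $\sigma_\Gamma=\sigma$.
\begin{enumerate}
\item The projection described above, which expands a graph into its homogeneous components, defines a linear map $P:\mathrm{Graph}_{\CC}(\Sigma)\to\bigoplus_\sigma \mathrm{Graph}^{\mathrm{bh}}_{\CC}(\Sigma)_\sigma$. It is well defined because vertex labels are multilinear and the expansion is canonical once direct sum decompositions are fixed. Different choices of decomposition differ by isomorphisms applied at the edges, and these are themselves local relations.
\item It then suffices to show that $P$ maps a null combination $\sum_j c_j\Gamma_j$, supported in a disk $D$, into $\bigoplus_\sigma\bigl(N\cap \mathrm{Graph}^{\mathrm{bh}}_\sigma\bigr)$.
\item Expanding the common exterior part and the boundary points on $\partial D$ into homogeneous pieces splits the null combination according to the homogeneous labeling of the exterior and of $\partial D$. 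Each such piece is again a null combination, because disk evaluation is compatible with the projections onto homogeneous components of the boundary objects.
\item Within one piece, $\sigma_\Gamma$ is determined by the exterior labels alone. Any path in $\Pi_1(\Sigma,Q)$ can be homotoped off the disk $D$, since the disk lies in the interior and the basepoints lie on $\partial\Sigma$. So each piece lies in a single $\sigma$-block.
\end{enumerate}
Consequently $N=\bigoplus_\sigma (N\cap\mathrm{Graph}_\sigma)$, and therefore $\SN_{\CC}(\Sigma)=\bigoplus_\sigma \mathrm{Graph}_\sigma/N_\sigma$. The last quotient is exactly $\SN_{\CC}(\Sigma)_\sigma$.

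\emph{Equivariance.} This follows from the identity $f\cdot\sigma_\Gamma=\sigma_{f(\Gamma)}$. The reason is that $f$ preserves intersection signs, since it is orientation preserving, and it fixes $\partial\Sigma$ pointwise, hence fixes $Q$. Thus $\rho^\CC_\Sigma(f)$ carries $\SN_{\CC}(\Sigma)_\sigma$ into $\SN_{\CC}(\Sigma)_{f\cdot\sigma}$. Applying the same argument to $f^{-1}$ gives equality of the two subspaces.

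The main obstacle is the independence step, specifically making the homogeneous splitting of a null combination rigorous. Edges of the exterior graph may cross $\partial D$ several times. The fix is to first refine every graph so that its edges meet $\partial D$ transversally in the common boundary data, and then to apply the grading projection edge by edge. The claim that evaluation commutes with these projections is exactly the naturality of the graphical calculus with respect to the morphisms $\iota_k$ and $\pi_k$.
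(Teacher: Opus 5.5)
Your proof is correct and follows essentially the same route as the paper. Your expansion map $P$ is the sum over $\sigma$ of the paper's maps $\widetilde{\Phi_\sigma}$, and both arguments rest on the same two facts: splitting a null combination by the homogeneous labeling of the exterior graph again gives null combinations, and $\sigma_\Gamma$ depends only on the exterior because paths in $\Pi_1(\Sigma,Q)$ can be pushed off $D$. One small slip: $N$ itself is not $\bigoplus_\sigma(N\cap\mathrm{Graph}^{\mathrm{bh}}_\sigma)$, since it contains non-homogeneous combinations. What you actually prove, and all you need, is $N\cap\bigoplus_\sigma\mathrm{Graph}^{\mathrm{bh}}_\sigma=\bigoplus_\sigma(N\cap\mathrm{Graph}^{\mathrm{bh}}_\sigma)$, which follows because $P$ is the identity on balanced homogeneous graphs.
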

\begin{proof}
Every graph is a linear combination of homogeneous graphs, and a homogeneous graph can have a nonzero string-net class only if it is balanced, hence \(\SN_{\CC}(\Sigma)=\sum_{\sigma}\SN_{\CC}(\Sigma)_{\sigma}\). To see directness it suffices to find a set of projectors.
Decomposing every object into homogeneous components gives a canonical map \(\Gamma\mapsto \sum_{l}\Gamma_{l}\), hence for every $\sigma$ there is a canonical map \(\widetilde{\Phi_{\sigma}}:\Gamma\mapsto \sum_{\sigma_{\Gamma_{l}}=\sigma}\Gamma_{l}\).
If $\sum_i c_i \Gamma_i$ is a null combination such that $\Gamma_j$ coincides outside of a disk, decomposing the edge labels that intersect the exterior of the disk into homogeneous components gives $\sum_{i,j}c_i\Gamma_{ij}$, then decomposing the edges in the disk gives $\sum_{i,j,l}c_i\Gamma_{ijl}$. Now note that for fixed $j$, $\sum_{i,l}c_i\Gamma_{ijl}$ is a null combination, also $\sigma_j=\sigma_{\Gamma_{ijl}}$ depends only on the labels outside of the disk, as you can always homotope the representative curve so that it avoids the disk. As a consequence, \[\widetilde{\Phi_{\sigma}} (\sum_i c_i \Gamma_i)=\sum_{j:\sigma_{j}=\sigma}\sum_{i,l} c_i\Gamma_{ijl}\]     is still a sum of null combinations. As a consequence, $\widetilde{\Phi_{\sigma}}$ descends to 
\[
\Phi_{\sigma}: \SN_{\CC}(\Sigma)\to \SN_{\CC}(\Sigma)
\]
that preserves a class of a balanced graph $\Gamma$ if $\sigma_{\Gamma}=\sigma$ and sends it to zero otherwise. Since \(f\cdot\sigma_{\Gamma}=\sigma_{f(\Gamma)}\), the action of  $f\in\Mod(\Sigma)$ sends generators of $\SN_{\CC}(\Sigma)_{\sigma}$ into $\SN_{\CC}(\Sigma)_{f\cdot\sigma}$.
\end{proof}

When the surface has nonempty boundary, there is an explicit expression of the decomposition of the string-net space in terms of Hom-spaces of \(\CC\) \cite{Runkel20}. Let \(\Sigma=\Sigma_{g}^n\) be the compact surface with \(n\geq1\) boundary components, and distinguish its \(n\)-th boundary component. Choose a cutting system consists of loops \(\{a_i,b_i\}_{i=1}^{g}\) based at $q_n$ and arcs \(\{c_j\}_{j=1}^{n-1}\) joining $q_n$ and $q_j$ such that cutting along them produces a disk (see  Figure \ref{fig:spine}).
Choose the dual graph with loops:
\[\{a_i^\vee,b_i^\vee\}_{i=1}^{g}\,\cup\,\{c_j^\vee\}_{j=1}^{n-1}\] labeled by
\[X=\{X_i\}_{i=1}^{g},Y=\{Y_i\}_{i=1}^{g},Z=\{Z_j\}_{j=1}^{n-1}.\]  and one vertex labeled by 
\[\varphi\in \Hom_\CC(\mathbf{1},T(X,Y,Z))\] 
 where \[T(X,Y,Z)= \bigotimes_{i=1}^{g}
(X_i\otimes Y_i\otimes X_i^*\otimes Y_i^*)
\otimes\!
\bigotimes_{j=1}^{n-1}
(Z_j\otimes Z_j^*)
.\]  
\begin{figure}[htbp]
  \centering
  \resizebox{0.86\linewidth}{!}{\input{surface_polygon_dual_empty.tex}}
  \caption{The graph determined by $\varphi$ dual to the cutting system on $\Sigma_{g}^n$}
  \label{fig:spine}
\end{figure}
 
\begin{lem}[Runkel{ \cite[Thm. 4.1]{Runkel20}}]
\label{lem:spine-isomorphism}For surface $\Sigma$ with nonempty boundary, the above construction gives an isomorphism
\[
  \Psi:\bigoplus_{ \substack{X,Y\in\Irr(\CC)^g\\ Z\in \Irr(\CC)^{n-1}\\ \deg T(X,Y,Z)=e   }}
  \Hom_{\CC}\!\left(\one,T(X,Y,Z)\right)
  \xrightarrow{\sim}\SN_{\CC}(\Sigma).
\]\end{lem}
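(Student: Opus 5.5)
The plan is to prove that $\Psi$ is well defined, surjective and injective, using that $\Sigma=\Sigma_g^n$ with $n\geq 1$ deformation retracts onto the dual graph $\Gamma_0=\{a_i^\vee,b_i^\vee\}\cup\{c_j^\vee\}$. This graph is a wedge of $2g+n-1$ circles with a single vertex $v$.

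\emph{Well-definedness and the grading constraint.} Each $\varphi\in\Hom_\CC(\one,T(X,Y,Z))$ labels the vertex of $\Gamma_0$ and so gives a colored graph. The map is linear in $\varphi$. When $\deg T(X,Y,Z)\neq e$ the Hom-space vanishes, so the restriction of the sum to balanced labels loses nothing.

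\emph{Surjectivity.} Take an arbitrary colored graph $\Gamma$ in $\operatorname{int}(\Sigma)$. Cutting $\Sigma$ along the arcs $a_i,b_i,c_j$ gives a disk $P$, a polygon. I would first isotope $\Gamma$ to be transverse to the cutting system, away from $q_n$. At each arc, fuse the parallel strands crossing it into a single strand using the completeness relation
\[
\mathrm{id}_{W_1\otimes\cdots\otimes W_k}=\sum_{V\in\Irr(\CC)}\sum_\alpha \iota_\alpha^V\circ\pi_\alpha^V .
\]
This relation is a local null-combination relation, applied in a small disk around a segment of the arc. After this step $\Gamma$ meets each arc in exactly one edge with a simple label, plus morphisms near the arcs. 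Everything inside $P$ can then be evaluated inside a disk and replaced by a single coupon joined to the $4g+2(n-1)$ edge ends in the boundary order of $P$. This uses Kirillov's disk evaluation: $\SN_\CC(D)$ with given boundary data is the corresponding Hom-space. The result is exactly $\Psi(\varphi)$ for some $\varphi$.

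\emph{Injectivity.} This is the main obstacle, because one must exclude relations coming from null combinations in disks that cross the cutting arcs. I would construct an explicit inverse map $\SN_\CC(\Sigma)\to\bigoplus\Hom(\one,T)$ and check it on generators. Given $\Gamma$ transverse to the cut system, perform the fusion above, evaluate inside $P$, and record $\varphi$. Independence of choices is checked move by move:
\begin{itemize}
\item isotopies of $\Gamma$ relative to the arcs that stay transverse;
\item passing an edge or vertex across an arc, which is handled by naturality of the completeness relation and the sliding identity $\sum_V d(V)\cdots$ (semisimplicity lets the projectors commute past morphisms);
\item null combinations supported in a disk $D$: after isotopy $D$ either avoids the arcs, in which case the combination evaluates to zero inside $P$, or it can be pushed off them using the previous invariance.
\end{itemize}
Equivalently, I could use a more conceptual route and cite Kirillov's excision/gluing formula $\SN_\CC(\Sigma)\cong\bigoplus\SN_\CC(P;\text{boundary labels})$ with matched simple labels on paired arcs, where the arcs end on $\partial\Sigma$. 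Since the boundary value is empty, the arcs run to the boundary but carry no labels, so the endpoint issue disappears.

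\emph{Conclusion.} The composite of the inverse map with $\Psi$ is the identity on each Hom-summand, because a graph $\Psi(\varphi)$ already meets each arc once with a simple label and the fusion is trivial there. Together with surjectivity, this shows that $\Psi$ is an isomorphism.
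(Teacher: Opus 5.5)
Your proposal is correct and follows essentially the same route as the paper. In both, the inverse is built by cutting along $a_i,b_i,c_j$, inserting the resolution of the identity on each arc, evaluating in the resulting disk, and checking invariance under isotopy, arc-crossing moves (Kirillov's Lemma~3.6) and null combinations pushed off the arcs. Your surjectivity argument is the paper's $\Psi\circ\Phi=\operatorname{id}$ step, and your injectivity argument is its $\Phi\circ\Psi=\operatorname{id}$ step.
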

\begin{proof}We construct the inverse to \(\Psi\) by ``cutting along" the cutting system. Let \(D\) be the disk obtained by cutting \(\Sigma\) along \(a_i,b_i,c_j\).
For each graph, isotope it so that it is transverse to the cutting arcs, with no vertices on them. Along each cutting arc, insert the resolution of the identity
\[
\operatorname{id}_{W}
=
\sum_{S\in\operatorname{Irr}(\CC)}
\sum_{\alpha}
\iota_{S,\alpha}\pi_{S,\alpha},
\]where \(W\) is the ordered tensor product of the labels crossing that arc, with duals determined by the orientations, and
\[
\iota_{S,\alpha}\in\operatorname{Hom}_{\CC}(S,W),
\qquad
\pi_{S,\alpha}\in\operatorname{Hom}_{\CC}(W,S)
\]are dual bases with respect to composition. Cutting and evaluating the resulting graphs in \(D\) defines an element \[\widetilde{\Phi}(\Gamma)\in\bigoplus_{X,Y\in \Irr(\CC)^{g},Z\in \Irr(\CC)^{n-1}} \Hom_{\CC}\!\left(\one,T(X,Y,Z)\right),\] with \(X_i,Y_i,Z_j\) the simple labels at \(a_i,b_i,c_j\), respectively. The sums  are independent of the chosen bases. 
By \cite[Lemma 3.6]{Kirillov11}, this assignment is invariant under passage of a vertex through a cutting arc. Applying the same identity to evaluation and coevaluation morphisms gives invariance under tangencies. Together with isotopy invariance of disk evaluation, this shows that \(\widetilde{\Phi}(\Gamma)\) is independent of the chosen transverse representative.

We next check that \(\widetilde{\Phi}\) annihilates null combinations. Suppose that \(\sum_k c_k\Gamma_k\) is supported in a disk \(D_0\subset\operatorname{int}(\Sigma)\). If \(D_0\) is disjoint from the cutting arcs, it vanishes by definition of the disk evaluation. Otherwise, isotope \(D_0\) into their complement, deforming all \(\Gamma_k\) simultaneously. The preceding invariance reduces this to the first case. Thus \(\widetilde{\Phi}\) induces a map
\(\Phi\) on \(\SN_{\CC}(\Sigma)\). For a graph defined by \(\varphi\) as in Figure \ref{fig:spine}, each cutting arc meets a single edge with a simple label, so cutting and evaluation recover \(\varphi\), hence \(\Phi\circ\Psi=\operatorname{id}.\) Conversely, in \(\Psi\Phi[\Gamma]\), summing the simple-object resolutions restores the identity at every cut, while replacing a disk diagram by its evaluation preserves its string-net class. Therefore \(\Psi\circ\Phi=\operatorname{id}\), proving the claim.
\end{proof}
 Note that the cutting arcs together with the boundary loops generate the fundamental groupoid \(\Pi_1(\Sigma,Q)\), and a graph $\Gamma$ determined by \(\varphi\in \Hom_\CC(\one,T(X,Y,Z))\) gives 
 \[\sigma_\Gamma(a_i)=\deg X_i,\sigma_\Gamma(b_i)=\deg Y_i,\sigma_\Gamma(c_j)=\deg Z_j.\] It follows that the decomposition here refines the canonical decomposition in Lemma \ref{lem:blockdecomposition}, and \(\Psi\) restricts to an isomorphism 
 \[\Psi_\sigma:\bigoplus_{(X,Y,Z)\in J_{\sigma}(\CC)}
  \Hom_{\CC}\left(\one,T(X,Y,Z)\right)
  \xrightarrow{\sim}\SN_{\CC}(\Sigma)_\sigma\] for each \(\sigma\in \HH(\Sigma;G)\), where the index set is   \[ J_{\sigma}(\CC)=
\prod_{i=1}^g\Irr(\CC_{\sigma(a_i)})\times \prod_{i=1}^g \Irr(\CC_{\sigma(b_i)})\times\prod_{j=1}^{n-1} \Irr(\CC_{\sigma(c_j)}).\]

%  \eqref{eq:spine-isomorphism} restricts to\begin{equation}\label{eq:spine-block}
%   \bigoplus_{\substack{X_i,Y_i\in\Irr(\CC)\\
%     \deg X_i=\sigma(a_i^\vee),\ \deg Y_i=\sigma(b_i^\vee)}}
%   \Hom_{\CC}\!\left(\one,W(\mathbf S)\right)
%   \xrightarrow[\sim]{\Psi_{\CC}}
%   \SN_{\CC}(\Sigma)_\sigma.
% \end{equation}

We can now state the main result of this section. For $m\geq1$, let
\[
  \CC^{\boxtimes_G m}
  :=\bigoplus_{k\in G}\CC_k^{\boxtimes m}
  \subseteq\CC^{\boxtimes m}
\]
be the full subcategory with the restricted product spherical structure and the grading
$(\CC^{\boxtimes_G m})_k=\CC_k^{\boxtimes m}$.
Its simple objects are the external products
$S^{(1)}\boxtimes\cdots\boxtimes S^{(m)}$ whose factors are simple
objects of a common degree. 
% Since this subcategory is full,
% \[\label{eq:coupon-factorization}
%   \Hom_{\CC^{\boxtimes_G m}}
%   (\one,W_1\boxtimes\cdots\boxtimes W_m)
%   \cong
%   \bigotimes_{\ell=1}^{m}\Hom_{\CC}(\one,W_\ell)
% \]
% whenever $W_1\boxtimes\cdots\boxtimes W_m$ belongs to
% $\CC^{\boxtimes_G m}$. All structure maps are componentwise, so
% planar evaluation respects this identification.

\begin{Thm}
\label{thm:blockwise-tensor-power}If $\partial \Sigma\neq \varnothing$,
there is a canonical \(\Mod(\Sigma)\)-equivariant isomorphism
\[
  \Theta: \SN_{\CC^{\boxtimes_G m}}(\Sigma) \xrightarrow{\sim} \bigoplus_{\sigma\in\HH(\Sigma;G)}  \left(\SN_{\CC}(\Sigma)_\sigma\right)^{\otimes m} ,
\]
whose restriction $\Theta_\sigma$ identifies the \(\sigma\)-block \(\operatorname{SN}_{\CC^{\boxtimes_Gm}}(\Sigma)_\sigma\) with \( \left(\SN_{\CC}(\Sigma)_\sigma\right)^{\otimes m}\). If $\rho_{\Sigma,\sigma}^{\CC}(f):\SN_\CC({\Sigma})_{\sigma}\to\SN_\CC({\Sigma})_{f\cdot\sigma} $ is restriction of $\rho_{\Sigma}^{\CC}(f)$ to the $\sigma$-block for $f\in \Mod(\Sigma)$, then \[
\Theta_{f\cdot \sigma}\circ\rho_{\Sigma,\sigma}^{\CC^{\boxtimes_G m}}(f)=(\rho_{\Sigma,\sigma}^\CC(f))^{\otimes m}\circ\Theta_{\sigma}.
\]
% Write $\rho_{\CC,\sigma}(f)$ for the map induced by $f\in \Mod(\Sigma)$ from the $\sigma$-block to the $f\cdot\sigma$-block, and similarly for $\CC^{\boxtimes_G m}$. Then, for every $f\in\Mod(\Sigma)$,
% \[\Theta_{f\cdot\sigma}\circ
%   \rho_{\CC^{\boxtimes_G m},\sigma}(f)
%   =
%   \rho_{\CC,\sigma}(f)^{\otimes m}\circ\Theta_\sigma.
% \]
\end{Thm}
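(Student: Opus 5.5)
The plan is to build $\Theta$ blockwise from Runkel's isomorphism (Lemma~\ref{lem:spine-isomorphism}) and then prove equivariance separately, without relying on any particular cutting system.

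\textbf{Step 1: block bijection.} Fix $\sigma\in\HH(\Sigma;G)$. Simple objects of $\CC^{\boxtimes_G m}$ of degree $k$ are products $S^{(1)}\boxtimes\cdots\boxtimes S^{(m)}$ with every $S^{(\ell)}\in\Irr(\CC_k)$. This gives a bijection $J_\sigma(\CC^{\boxtimes_G m})\cong J_\sigma(\CC)^m$. Under it, $T(X,Y,Z)$ computed in $\CC^{\boxtimes_G m}$ is the external product $T(X^{(1)},Y^{(1)},Z^{(1)})\boxtimes\cdots\boxtimes T(X^{(m)},Y^{(m)},Z^{(m)})$. Since $\CC^{\boxtimes_G m}\subseteq\CC^{\boxtimes m}$ is full, its Hom space from $\one$ is $\bigotimes_\ell\Hom_\CC(\one,T(X^{(\ell)},Y^{(\ell)},Z^{(\ell)}))$. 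Distributing the direct sum over the tensor product then gives
\[
\bigoplus_{J_\sigma(\CC^{\boxtimes_G m})}\Hom(\one,T)\cong\Bigl(\bigoplus_{J_\sigma(\CC)}\Hom_\CC(\one,T)\Bigr)^{\otimes m}.
\]
Conjugating by $\Psi_\sigma$ for $\CC^{\boxtimes_G m}$ and by $\Psi_\sigma^{\otimes m}$ for $\CC$ defines $\Theta_\sigma$, and $\Theta=\bigoplus_\sigma\Theta_\sigma$. The point is that the fiber product forces all $m$ factors into the same degree along every cutting arc, so every factor lands in the same $\sigma$-block. This is why $\Theta$ is a blockwise tensor power rather than $\SN_\CC(\Sigma)^{\otimes m}$.

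\textbf{Step 2: intrinsic description.} Equivariance cannot be checked directly, because a mapping class moves the cutting system. I would instead show that $\Theta_\sigma$ has a cutting-free description. For a balanced homogeneous $\CC^{\boxtimes_G m}$-colored graph $\Gamma$ whose edge labels are external products $W_e=W_e^{(1)}\boxtimes\cdots\boxtimes W_e^{(m)}$ of a common degree, and whose vertex labels are pure tensors $\varphi_v=\varphi_v^{(1)}\otimes\cdots\otimes\varphi_v^{(m)}$, I claim
\[
\Theta[\Gamma]=[\Gamma^{(1)}]\otimes\cdots\otimes[\Gamma^{(m)}],
\]
where $\Gamma^{(\ell)}$ is the same embedded graph colored by the $\ell$-th factors. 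Each $\Gamma^{(\ell)}$ lies in the $\sigma_\Gamma$-block. Such graphs span $\SN_{\CC^{\boxtimes_G m}}(\Sigma)$: by semisimplicity, edges can be taken simple, hence external products, and the vertex Hom spaces are tensor products, so linear combinations of pure tensors suffice.

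To prove the claim, I would apply the cutting map $\Phi$ from the proof of Lemma~\ref{lem:spine-isomorphism}. The resolution of identity in $\CC^{\boxtimes_G m}$ on a tensor product of external products is a product of the factorwise resolutions in $\CC$, restricted to simple summands of a common degree. Restricting to a common degree removes nothing, because all factors of every crossing label already share the same degree. Disk evaluation in $\CC^{\boxtimes m}$ is the tensor product of the factorwise evaluations, since all structure maps are componentwise. Hence $\Phi_{\CC^{\boxtimes_G m}}(\Gamma)=\bigotimes_\ell\Phi_\CC(\Gamma^{(\ell)})$, which gives the claim.

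\textbf{Step 3: equivariance.} With Step 2 in hand, equivariance is immediate. The graph $f(\Gamma)$ has $f(\Gamma)^{(\ell)}=f(\Gamma^{(\ell)})$, and $\sigma_{f(\Gamma)}=f\cdot\sigma$. So both sides of the stated identity send $[\Gamma]$ to $\bigotimes_\ell[f(\Gamma^{(\ell)})]$, and they agree on a spanning set. Canonicity, meaning independence of the cutting system, follows the same way, since the formula in Step 2 does not refer to any cutting system.

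\textbf{Main obstacle.} I expect the hardest part to be the bookkeeping in Step 2: the multiplicity bases and dual bases in $\CC^{\boxtimes m}$ must factor exactly as tensor products. The pivotal and cyclic identifications at vertices must also be factorwise, and this depends on $\CC^{\boxtimes_G m}$ carrying the product spherical structure.
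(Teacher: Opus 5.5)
Your proposal is correct and follows the paper's proof essentially step for step. As in the paper, you build $\Theta_\sigma$ from Runkel's isomorphism and the factorization of Hom spaces in the Deligne power, then deduce equivariance from the formula $\Theta[\Gamma]=[\Gamma^{(1)}]\otimes\cdots\otimes[\Gamma^{(m)}]$ on the spanning set of balanced homogeneous graphs with external-product labels and pure-tensor coupons; your Step 2 usefully spells out why the cutting map factorizes (product dual bases in the resolution of identity, componentwise disk evaluation), which the paper only asserts.
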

\begin{proof}
Fix $\sigma\in\HH(\Sigma;G)$. In the decomposition of Lemma~\ref{lem:spine-isomorphism}, the $\sigma$-block consists of summands with prescribed degrees of $X,Y,Z$. Since the simple objects of \(\CC^{\boxtimes_Gm}\) are external products of simple objects of a common degree, 
the canonical isomorphism
\[
 \Hom_{\CC^{\boxtimes_G m}}\left(\one,\bigboxtimes_{l=1}^m T\bigl(X^{(l)},Y^{(l)},Z^{(l)}\bigr)\right)  \cong  \bigotimes_{l=1}^{m}\Hom_\CC(\one,T\bigl(X^{(l)},Y^{(l)},Z^{(l)}\bigr))
\]
 gives
\begin{align*}    &\bigoplus_{(X,Y,Z)\in J_{\sigma}(\CC^{\boxtimes_G m})}\Hom_{\CC^{\boxtimes_G m}}\left(\one, T\bigl(X,Y,Z\bigr)\right)\\&\cong 
    \bigoplus_{\substack{(X^{(l)},Y^{(l)},Z^{(l)})\in J_{\sigma}(\CC)\\1\leq l\leq m}}\Hom_{\CC^{\boxtimes_G m}}\left(\one,\bigboxtimes_{l=1}^m T\bigl(X^{(l)},Y^{(l)},Z^{(l)}\bigr)\right) \\ &\cong  \bigoplus_{\substack{(X^{(l)},Y^{(l)},Z^{(l)})\in J_{\sigma}(\CC)\\1\leq l\leq m}}\bigotimes_{l=1}^{m}\Hom_\CC\left(\one,T\bigl(X^{(l)},Y^{(l)},Z^{(l)}\bigr)\right)\\  &\cong \left(\bigoplus_{(X,Y,Z)\in J_{\sigma}(\CC)}\Hom_\CC(\one,T\bigl(X,Y,Z\bigr))\right)^{\otimes m}.
\end{align*}
 Applying Lemma~\ref{lem:spine-isomorphism} gives an isomorphism \(\Theta_\sigma: \operatorname{SN}_{\CC^{\boxtimes_Gm}}(\Sigma)_\sigma\to \left(\SN_{\CC}(\Sigma)_\sigma\right)^{\otimes m}\). Set
$\Theta=\bigoplus_{\sigma\in\HH(\Sigma;G)}\Theta_\sigma$.

Let $\Gamma$ be a balanced homogeneous graph with external-product
edge labels and pure-tensor coupons, and let $\Gamma^{(l)}$
denote its $l$-th coordinate graph, the construction in Lemma \ref{lem:spine-isomorphism} gives
\[
\Theta\left([\Gamma]\right)=[\Gamma^{(1)}]\otimes\cdots\otimes[\Gamma^{(m)}].
\]  This implies \[\Theta\left(\rho_{\Sigma}^{\CC^{\boxtimes_Gm}}(f)[\Gamma]\right)=[f\Gamma^{(1)}]\otimes\cdots\otimes[f\Gamma^{(m)}]=
\left(\rho_{\Sigma}^\CC(f)\right)^{\otimes m}\Theta[\Gamma]\] for every
$f\in\Mod(\Sigma)$, which proves equivariance because the classes of such graphs span the string-net space.\end{proof}

\section{Categorical manipulation}\label{sec:Categorical manipulation}
In this section, for a weakly group-theoretical fusion category $\AA$, we will show that there exists a finite group $H$ and an $H$-graded fusion category $\DD$ Morita equivalent to $\ZZ(\AA)$ whose eighth fiber power is group-theoretical. We first focus on the study of pointed modular categories. For every pointed modular category $\CC(A,q)$,  $\CC(A,q)^{\boxtimes 8}$ contains a Lagrangian subcategory \cite{DGNO, DMNO}. The next proposition shows that one can choose it to be invariant under the diagonal action of $\Aut(A,q)$.

\begin{prop}\label{prop:stable_Lagrangian}
For any non-degenerate pointed category $\CC(A,q)$, $\CC(A,q)^{\boxtimes 8}$ contains an $\Aut(A,q)$-stable Lagrangian subcategory, where the action is diagonal.    
\end{prop}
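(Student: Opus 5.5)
Lagrangian subcategories of $\CC(A,q)^{\boxtimes 8}=\CC(A^8,q^{\oplus 8})$ are exactly the subcategories $\CC(L)$ with $L\subseteq A^8$ a subgroup such that $q^{\oplus8}|_L\equiv 1$ and $|L|^2=|A|^8$. The diagonal action of $\varphi\in\Aut(A,q)$ sends $\CC(L)$ to $\CC(\varphi^{\oplus8}(L))$. So it suffices to build such an $L$ by a formula that commutes with every $\varphi$. The plan is to take $L=M(A)$ for a fixed integer matrix $M\in M_{8\times 8}(\BZ)$ acting on $A^8=A\otimes\BZ^8$. Since $\varphi^{\oplus 8}$ commutes with $M$, any such $L$ is automatically stable. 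This reduces the problem to choosing $M$.

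\emph{Choice of $M$.} Take $M$ orthogonal over $\BZ[1/2]$, or more precisely with $M^{T}M=k\cdot I$ for a suitable integer $k$. Then
\[
q^{\oplus8}(Mx)=\prod_i q\Bigl(\sum_j M_{ij}x_j\Bigr).
\]
Expanding with $b_q$ gives
\[
\prod_j q(x_j)^{(M^TM)_{jj}}\cdot\prod_{j<j'}b_q(x_j,x_{j'})^{(M^TM)_{jj'}}.
\]
This uses $q(nx)=q(x)^{n^2}$ and the formula $q(\sum y)=\prod q(y)\prod b(y,y')$, which hold for quadratic forms. With $M^TM=kI$ this equals $\prod_j q(x_j)^k$. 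So we need $q^k\equiv1$ and $|MA|^2=|A|^8$.

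\emph{Odd and $2$-primary parts.} Decompose $(A,q)=(A_{\mathrm{odd}},q)\oplus(A_2,q)$ orthogonally. This decomposition is canonical, so it is preserved by $\Aut(A,q)$, and it is enough to treat each part separately, taking $L=L_{\mathrm{odd}}\oplus L_2$.

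For the odd part, multiplication by $2$ is invertible. I would use a $4\times 4$ (Lipschitz) quaternion-type matrix $M$ with $M^TM=-I$ over $\BZ_p$. Such a matrix exists because $-1$ is a sum of four squares mod any odd modulus, by Lagrange's theorem lifted via Hensel. Then
\[
L=\{(x,Mx)\}\subseteq A^4\oplus A^4,
\]
the graph of an anti-isometry, is isotropic and of order $|A|^4$, hence Lagrangian. However, choosing $M$ depends on the exponent, not on $\varphi$, so it is still equivariant.

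For the $2$-part, I would instead use the $8\times 8$ integer matrix associated with the $E_8$ lattice or a Hadamard matrix $H$ with $H^TH=8I$. Then $q^{\oplus8}(Hx)=\prod_j q(x_j)^8$. This is trivial only if the exponent of $q$ divides $8$, which fails in general.

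\emph{Main obstacle.} The main obstacle is the $2$-primary part with large exponent. There I would instead construct $L$ as a graph $\{(x,\psi(x))\}$ of an anti-isometry
\[
\psi:(A_2^{4},q^{\oplus4})\to(A_2^{4},q^{\oplus 4})^{-1}
\]
given by an integer matrix commuting with the diagonal action. Over $\BZ_2$, $-1$ is a sum of four squares of $2$-adic integers only when combined appropriately; $-1=\sum a_i^2$ fails $2$-adically, since $-1\equiv 7 \bmod 8$ requires four squares, and $7=4+1+1+1$ works mod 8 but needs a Hensel lift. Because $7$ is a sum of four odd/even squares, $-1$ is a sum of four squares in $\BZ_2$ (quaternion norm form represents all $2$-adic units). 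So the same quaternion matrix $M$ with $M^TM=-I$ over $\BZ_2$, reduced mod the exponent, gives an equivariant anti-isometry on $A_2^4$.

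If this works uniformly, then four copies suffice, and eight copies are needed only to combine with the forms where doubling is required. I would double-check the case where $q$ is not determined by $b_q$, which is exactly where the diagonal term computation $q(nx)=q(x)^{n^2}$ matters. Should it fail, the fallback is the classical fact that $8[(A,q)]=0$ in the Witt group, realized by the explicit equivariant matrix construction above.
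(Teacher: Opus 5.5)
Your final construction is the paper's proof, and it is correct: the graph $\{(x,Fx)\}\subset A^{\oplus4}\oplus A^{\oplus4}$ of a quaternion matrix $F$ with $F^{T}F\equiv -I$ modulo the order of $q$ is isotropic because $q_4(Fx)=q_4(x)^{-1}$, and it is $\Aut(A,q)$-stable because $F$ commutes with $\varphi^{\oplus4}$. The paper avoids your odd/$2$-primary splitting and $p$-adic lifting by noting $q^{N}\equiv 1$ for $N=2|A|$ and applying Lagrange's theorem directly to the positive integer $N-1$; also, your aside that ``four copies suffice'' is false (the semion $\CC(\BZ/2\BZ,\,q(1)=i)$ has Witt order $8$), though your graph construction does use all eight factors.
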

\begin{proof}
Define the quadratic form $q_m$ on $A^{\oplus m}$ by $q_m(g_1,g_2,\dots, g_m)=\prod^m_{i=1}q(g_i)$. Put $N=2|A|$. Then $\gcd(N-1,|A|)=1$. For every $g\in A$, we have $b_q(g,g)=q^2(g)$, and hence $q^{2|A|}(g)=q^N(g)=1$. By Lagrange's four-square theorem, choose integers $a,b,c,d$ such that:
\[
a^2+b^2+c^2+d^2=N-1.
\]
Consider the matrix
\[
F=
\begin{pmatrix}
 a& b& c& d\\
-b& a&-d& c\\
-c& d& a&-b\\
-d&-c& b& a
\end{pmatrix}.
\]
We have
\[
F^TF=FF^T=(a^2+b^2+c^2+d^2)I=(N-1)I.
\]
Therefore, $F$ defines an automorphism of $A^{\oplus 4}$, for $g=(g_1,g_2,g_3,g_4)\in A^{\oplus 4} $, we have:
\begin{align*}
q_4(Fg)&=\prod^4_{i=1}q(g_i)^{\sum_jF^2_{j,i}}\prod_{i<j}b_q(g_i,g_j)^{\sum_kF_{k,i}F_{k,j}}\\
&=(q_4(g))^{N-1}.    
\end{align*}
Let $L=\{(g,Fg)\mid g\in A^{\oplus4}\}$ and let $\LL\subset\CC(A,q)^{\boxtimes 8}$
be the full subcategory generated by these simple objects. Since $q_8(g,Fg)=q_4(g)^N=1$ and $\dim^2(\LL)=\dim(\CC(A,q)^{\boxtimes 8})$, $\LL$ is a Lagrangian subcategory. By construction, it is invariant under the diagonal action of $\Aut(A,q)$.
\end{proof}

When $|A|$ is odd, the preceding construction gives a stronger conclusion. Consider
\[
 L_\pm=\{(g,\pm Fg)\mid g\in A^{\oplus4}\}.
\]
Both are Lagrangian and invariant under the diagonal action of $\Aut(A,q)$. Moreover, if $(g,Fg)=(h,-Fh)$, then $g=h$ and $2Fg=0$. Since both $F$ and multiplication by $2$ are invertible on $A^{\oplus4}$, we obtain $L_+\cap L_-=\{0\}$. Comparing orders we have
\[
 A^{\oplus8}=L_+\oplus L_-.
\]
Thus $q_8$ is hyperbolic \cite[Sec.~4.5]{GNN09}, and we have the following braided equivalence \cite[Example~8.5.4]{EGNO},
\begin{Cor}\label{Cor:untwisted_double}
When $|A|$ is odd,  we have the following braided equivalence
\[
 \CC(A,q)^{\boxtimes8}
 \simeq_{\mathrm{br}}
 \ZZ\left(\operatorname{Vec}_{A^{\oplus4}}\right).
\]   
\end{Cor}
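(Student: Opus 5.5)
The plan is to build the equivalence explicitly from the decomposition $A^{\oplus 8}=L_+\oplus L_-$ established just above, using the standard description of the untwisted double of an abelian group. First I will recall, following \cite[Example~8.5.4]{EGNO}, that for a finite abelian group $B$ one has $\ZZ(\operatorname{Vec}_B)\simeq_{\mathrm{br}}\CC(B\oplus\widehat B,\,q_{\mathrm{hyp}})$ with $q_{\mathrm{hyp}}(b,\chi)=\chi(b)$. Since pointed braided fusion categories are classified by pre-metric groups, it is enough to produce an isometry
\[
(A^{\oplus 8},q_8)\;\cong\;\bigl(A^{\oplus4}\oplus\widehat{A^{\oplus4}},\,q_{\mathrm{hyp}}\bigr).
\]

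To get this isometry I will use that $L_+$ and $L_-$ are both isotropic: $q_8$ is trivial on each, as the proof of Proposition~\ref{prop:stable_Lagrangian} shows via $q_8(g,\pm Fg)=q_4(g)^N=1$; for the minus sign, $q_4(-Fg)=q_4(Fg)$. Write $x=\ell_++\ell_-$. Then $q_8(x)=q_8(\ell_+)q_8(\ell_-)b_{q_8}(\ell_+,\ell_-)=b_{q_8}(\ell_+,\ell_-)$. Non-degeneracy of $b_{q_8}$ together with isotropy of $L_+$ gives $L_+^\perp\supseteq L_+$. Counting orders, $|L_+|^2=|A|^8$, so $L_+^\perp=L_+$, and likewise for $L_-$. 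Hence the pairing $L_-\to\widehat{L_+}$, $\ell_-\mapsto b_{q_8}(-,\ell_-)$, is injective. Its kernel lies in $L_-\cap L_+^\perp=L_-\cap L_+=0$, so by counting it is an isomorphism.

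Identifying $L_+\cong A^{\oplus4}$ via $g\mapsto(g,Fg)$ then transports $q_8$ to $(b,\chi)\mapsto\chi(b)$ on $A^{\oplus4}\oplus\widehat{A^{\oplus4}}$. This is exactly the hyperbolic form, which finishes the argument.

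The only real subtlety is the Lagrangian property of $L_-$, and in particular that $q_8$ is trivial on it rather than merely that $b_{q_8}$ is. This follows from $q(-x)=q(x)$, which gives $q_4(-Fg)=q_4(Fg)=q_4(g)^{N-1}$. Oddness of $|A|$ is needed only for $L_+\cap L_-=0$, which was already verified above.
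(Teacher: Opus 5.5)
Your proposal is correct and follows the paper's route: the complementary Lagrangians $L_\pm$ make $q_8$ hyperbolic, and \cite[Example~8.5.4]{EGNO} then identifies the resulting pointed braided category with $\ZZ(\operatorname{Vec}_{A^{\oplus4}})$. The only difference is that you prove the hyperbolicity step directly, via the isomorphism $L_-\cong\widehat{L_+}$ induced by $b_{q_8}$, whereas the paper cites it from \cite[Sec.~4.5]{GNN09}.
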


Let $\AA$ be a weakly group-theoretical fusion category. By \cite[Prop.~3.10]{GN21} (cf.~\cite{Nat18}), there exist a braided $G$-crossed fusion category $\CC=\bigoplus_{g\in G} \CC_g$, and $\CC_e\simeq \CC(A,q)$  such that 
\[
Z(\AA)\simeq \CC^{G}.
\]
Set $\DD=\CC\rtimes G$. Then $\DD$ is Morita equivalent to $\CC^G$ \cite[Prop.~3.2]{Nk08} and is graded by $G\rtimes_{\operatorname{Ad}} G$; its $(g,h)$-component is the copy of $\CC_g$ labelled by $h$.

\begin{lem}\label{lem:fiber_crossed_product}
Let $H=G\rtimes_{\operatorname{Ad}} G$, with $(g,h)(g',h')=(ghg'h^{-1},hh')$. We have the following tensor equivalence:
\[
\DD^{\boxtimes_{H} m}\simeq  \CC^{\boxtimes_G m}\rtimes G.
\]
\end{lem}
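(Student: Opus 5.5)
We will unwind both sides concretely as the semidirect product. Recall that for a fusion category $\CC$ with a $G$-action $T$, the crossed product $\CC\rtimes G$ is $\bigoplus_{h\in G}\CC\boxtimes\delta_h$, with tensor product
\[
(X\boxtimes\delta_h)\otimes(Y\boxtimes\delta_{h'})=(X\otimes T_h(Y))\boxtimes\delta_{hh'}.
\]
The associativity constraint is built from the tensor structures of the $T_h$ and the isomorphisms $\mu_{h,h'}$. When $\CC$ is $G$-graded with $T_h(\CC_g)\subseteq\CC_{hgh^{-1}}$, the $H$-grading puts $X\boxtimes\delta_h$ with $X\in\CC_g$ in degree $(g,h)$. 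This is consistent with the product rule $(g,h)(g',h')=(ghg'h^{-1},hh')$, since $X\otimes T_h(Y)\in\CC_{ghg'h^{-1}}$.

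First we describe $\DD^{\boxtimes_H m}$. By definition it is $\bigoplus_{(g,h)}\DD_{(g,h)}^{\boxtimes m}$. Each summand is $(\CC_g\boxtimes\delta_h)^{\boxtimes m}$, which we identify with $\CC_g^{\boxtimes m}\boxtimes\delta_h$. As an abelian category this is $\bigoplus_h(\bigoplus_g\CC_g^{\boxtimes m})\boxtimes\delta_h=\CC^{\boxtimes_G m}\boxtimes \BC[G]$ with the induced grading. This is exactly the underlying category of $\CC^{\boxtimes_G m}\rtimes G$, where $G$ acts on $\CC^{\boxtimes_G m}$ diagonally by $T_h^{\boxtimes m}$. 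This diagonal action is well defined on the fiber power because $T_h$ sends $\CC_g$ to $\CC_{hgh^{-1}}$ for every factor simultaneously. Its coherence data are $\mu_{h,h'}^{\boxtimes m}$, and the monoidal structures are $m$-fold external products.

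Next we define the functor. Let $\Phi$ send
\[
(X_1\boxtimes\delta_h)\boxtimes\cdots\boxtimes(X_m\boxtimes\delta_h)\mapsto (X_1\boxtimes\cdots\boxtimes X_m)\boxtimes\delta_h.
\]
This is well defined because objects of the fiber power share a common degree $(g,h)$, so the $\delta_h$ labels agree. We then compare tensor products. In $\DD^{\boxtimes_H m}\subseteq\DD^{\boxtimes m}$ the tensor product is componentwise, so the product of the two objects is $\boxtimes_l\bigl((X_l\otimes T_h(Y_l))\boxtimes\delta_{hh'}\bigr)$. In the crossed product it is $(\boxtimes_l X_l)\otimes T^{\boxtimes m}_h(\boxtimes_l Y_l)\boxtimes\delta_{hh'}$. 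These coincide, so we take the identity as the monoidal structure of $\Phi$. Associativity constraints match because every constraint in $\DD^{\boxtimes m}$ is the external product of the constraints of $\DD$. The constraints of $\DD$ are built from the associator of $\CC$, the tensor structures $J_h$ of $T_h$, and $\mu_{h,h'}$, and these are precisely the ingredients, taken $m$-fold, that define the associator of $\CC^{\boxtimes_G m}\rtimes G$. The unit is $\one^{\boxtimes m}\boxtimes\delta_e$ on both sides.

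Finally, $\Phi$ is an equivalence. It is fully faithful, since Hom spaces on both sides are $\bigotimes_l\Hom_\CC(X_l,Y_l)$ when the $h$ labels agree and zero otherwise. It is essentially surjective by the description of simples: a simple object of either side is $S^{(1)}\boxtimes\cdots\boxtimes S^{(m)}\boxtimes\delta_h$ with the $S^{(l)}$ in a common $\CC_g$. As a byproduct, $\Phi$ preserves the $H$-gradings.

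The main obstacle is bookkeeping the associator. One must check that the external tensor product of crossed-product associators agrees with the crossed-product associator built from the diagonal action, including the ordering of the factors $J_h$ and $\mu$. We handle this by observing that the restriction of $(\CC\rtimes G)^{\boxtimes m}\simeq\CC^{\boxtimes m}\rtimes G^m$ to the diagonal subgroup $G\subseteq G^m$ and to the fiber-power subcategory is a full monoidal subcategory, which is visibly the crossed product by the diagonal action.
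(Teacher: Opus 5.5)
Your proposal is correct and follows essentially the same route as the paper. You use the same componentwise functor $(X_1\boxtimes\delta_h)\boxtimes\cdots\boxtimes(X_m\boxtimes\delta_h)\mapsto(X_1\boxtimes\cdots\boxtimes X_m)\boxtimes\delta_h$, the same direct comparison of tensor products, and the same observation that the coherence data (associator, $J_h$, $\mu_{h,h'}$) are inherited componentwise. Your extra checks of full faithfulness and essential surjectivity, and your description via the diagonal subgroup of $G^m$ acting on $\CC^{\boxtimes m}$, make explicit what the paper asserts without detail.
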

\begin{proof}
Write an object of $\DD_{(g,h)}$ as $(X_{i,g},h)$, where $X_{i,g}\in \CC_g$.  Define the functor: 
\[
F: \DD^{\boxtimes_{H} m}\longrightarrow  \CC^{\boxtimes_G m}\rtimes G,
\]
component-wise by
\[
F(\boxtimes^{m}_{i=1}(X_{i,g},h))=(\boxtimes^{m}_{i=1}X_{i,g},h).
\]
The functor $F$ is an equivalence. Its compatibility with the tensor products follows from the calculation below; the tensorator and coherence maps are inherited componentwise. 
\begin{align*}
(\boxtimes^{m}_{i=1}X_{i,g_1},h_1)\otimes (\boxtimes^{m}_{i=1}Y_{i,g_2},h_2)  =&\bigl(\boxtimes^{m}_{i=1}X_{i,g_1}\otimes T_{h_1}(\boxtimes^{m}_{i=1}Y_{i,g_2}),h_1h_2\bigr)\\
=&\bigl(\boxtimes^{m}_{i=1}X_{i,g_1}\otimes \boxtimes^{m}_{i=1}T_{h_1}(Y_{i,g_2}),h_1h_2\bigr)\\
=&\bigl(\boxtimes^{m}_{i=1}X_{i,g_1}\otimes T_{h_1}(Y_{i,g_2}),h_1h_2\bigr),
\end{align*}
therefore,
\begin{align*}
F\left((\boxtimes^{m}_{i=1}(X_{i,g_1},h_1))\otimes (\boxtimes^{m}_{i=1}(Y_{i,g_2},h_2))\right)&=F\bigl(\boxtimes^{m}_{i=1}\bigl(X_{i,g_1}\otimes T_{h_1}(Y_{i,g_2}),h_1h_2\bigr)\bigr)\\    
&=(\boxtimes^{m}_{i=1}X_{i,g_1}\otimes T_{h_1}(Y_{i,g_2}),h_1h_2)\\
&=(\boxtimes^{m}_{i=1}X_{i,g_1},h_1)\otimes (\boxtimes^{m}_{i=1}Y_{i,g_2},h_2)
\end{align*}
\end{proof}

\begin{Thm}\label{thm:D8_gt}
With the notation and assumptions above, set $H=G\rtimes_{\operatorname{Ad}}G$. Then the eighth fiber power $\DD^{\boxtimes_H 8}$ is group-theoretical.
\end{Thm}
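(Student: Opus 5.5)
By Lemma~\ref{lem:fiber_crossed_product}, $\DD^{\boxtimes_H 8}\simeq \CC^{\boxtimes_G 8}\rtimes G$. The plan is to identify this crossed product, up to Morita equivalence, with an equivariantization of a braided $G$-crossed category, and then to find a Lagrangian subcategory in the Drinfeld center.

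\textbf{Step 1: $\CC^{\boxtimes_G 8}$ is braided $G$-crossed.} I would check that $\CC^{\boxtimes_G 8}=\bigoplus_{g}\CC_g^{\boxtimes 8}$, with the diagonal $G$-action $T_h^{\boxtimes 8}$ and componentwise crossed braiding, is a faithfully graded braided $G$-crossed fusion category. Its neutral component is $\CC(A,q)^{\boxtimes 8}$, which is non-degenerate. This is the fiber-product construction of Green--Nikshych, and the crossed-braiding axioms hold factorwise.

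\textbf{Step 2: pass to the equivariantization.} By \cite[Prop.~3.2]{Nk08}, $\CC^{\boxtimes_G 8}\rtimes G$ is Morita equivalent to $\EE:=(\CC^{\boxtimes_G 8})^G$. Since the grading is faithful and the neutral component is non-degenerate, $\EE$ is non-degenerate braided. Hence $\ZZ(\CC^{\boxtimes_G 8}\rtimes G)\simeq \EE\boxtimes\EE^{\mathrm{rev}}$, or more simply one can note directly that $\EE$ is Morita equivalent to $\DD^{\boxtimes_H 8}$. Group-theoreticity is Morita invariant, so it suffices to show that $\EE$ is group-theoretical. Because $\EE$ is non-degenerate, this amounts to exhibiting a Lagrangian subcategory of $\EE$ itself: by \cite[Thm.~4.64]{DGNO}, $\EE\simeq\ZZ(\mathrm{Vec}_K^\omega)$ for some $K$, and $\EE$ is then group-theoretical.

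\textbf{Step 3: build the Lagrangian subcategory.} The $G$-action on $\CC_e=\CC(A,q)$ is by braided autoequivalences, so it acts on $(A,q)$ through $\Aut(A,q)$. The diagonal action on $\CC(A,q)^{\boxtimes 8}$ therefore preserves the Lagrangian $\LL$ of Proposition~\ref{prop:stable_Lagrangian}. I would take the subcategory $\LL^G\subseteq(\CC_e^{\boxtimes 8})^G\subseteq\EE$, generated by the equivariant objects whose underlying object lies in $\LL$, and show it is Lagrangian in $\EE$. Its Frobenius--Perron dimension is $|G|\,\FPdim(\LL)=|G|\,|A|^4$. Since $\FPdim(\EE)=|G|^2|A|^8$, this is exactly $\FPdim(\EE)^{1/2}$.

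\textbf{Main obstacle.} The hard part is showing that $\LL^G$ is Tannakian, not merely isotropic. Its twists are trivial, because $q_8$ vanishes on $L$ and $\Rep(G)$ has trivial twists. However, the equivariant structure can a priori introduce a nontrivial 2-cocycle, giving super-Tannakian or twisted phenomena, since the $G$-action on the pointed category $\LL\simeq\mathrm{Vec}_L$ may carry nontrivial tensor-structure data. My first approach would be to note that $\LL^G$ is symmetric with trivial twists and is therefore Tannakian by Deligne's theorem: a symmetric fusion category is Tannakian precisely when all twists are trivial for the canonical ribbon structure. Since $\LL$ is symmetric with trivial twists, and $\LL^G$ is an equivariantization of a Tannakian category by an action compatible with the braiding, $\LL^G$ is symmetric with $\theta=\mathrm{id}$. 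Then the dimension count, together with the inequality $\FPdim(\mathcal K)\,\FPdim(\mathcal K')=\FPdim(\EE)$, gives $\LL^G=(\LL^G)'$, so $\LL^G$ is Lagrangian.
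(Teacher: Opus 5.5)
Your proposal is correct and follows the same route as the paper: you reduce via Lemma~\ref{lem:fiber_crossed_product} and \cite[Prop.~3.2]{Nk08} to the equivariantization $(\CC^{\boxtimes_G 8})^G$, then use the $\Aut(A,q)$-stable Lagrangian of Proposition~\ref{prop:stable_Lagrangian} in the neutral component. The only difference is that the paper cites \cite[Lem.~6.3]{GN21} for the final step, whereas you verify it directly by showing $\LL^G$ is a Tannakian subcategory of dimension $\FPdim(\EE)^{1/2}$ in the non-degenerate category $\EE$, and then applying \cite[Thm.~4.64]{DGNO}; note that its twists are trivial because the forgetful functor to $\CC_e^{\boxtimes 8}$ is braided and pivotal, not because $\Rep(G)$ has trivial twists.
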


\begin{proof}
Since being group-theoretical is invariant under Morita equivalence, from Lemma \ref{lem:fiber_crossed_product}, it suffices to prove $(\CC^{\boxtimes_G 8})^G$ is group-theoretical. From Proposition \ref{prop:stable_Lagrangian}, the trivial component has a $G$-stable Lagrangian subcategory, therefore the result follows from \cite[Lem.~6.3]{GN21} (cf. \cite[Thm.~4.64]{DGNO}).   
\end{proof}

\begin{exmp}
Metaplectic categories $SO(N)_2$ ($N=2r+1$) are the unitary modular categories with the same fusion rules as the type $B$ quantum group category at level $2$, see \cite{HNW13,HNW14,ACRW16} for the characterization of inequivalent theories. Let its $\Zn2$ de-equivariantization be $\CC=\operatorname{TY}(A,\chi,\tau)$ \cite{TY98}, where $\tau=\pm |A|^{-1/2}$, and $\chi$ is the bicharacter. Since associators multiply under fiber products,
\[
 \CC_8:=\CC^{\boxtimes_{\Zn2}8}
 \simeq
 \operatorname{TY}(A^{\oplus 8},\chi^{\oplus8},|A|^{-4}).
\]

The complementary Lagrangians $L_\pm\cong A^{\oplus4}$ constructed in Corollary \ref{Cor:untwisted_double} make $\chi^{\oplus8}$ hyperbolic. By \cite[Prop.~4.7]{Bis18}, the equivariantization $\CC_8^{\Zn2}$ has the same positive sign. Hence \cite[Props~5.7--5.8]{Bis18} give
\[
 \CC_8^{\Zn2}\simeq_{\mathrm{br}}\ZZ\left(\operatorname{Vec}_{A^4\rtimes_{-1} \Zn2}\right).
\]
Here $\Zn2$ acts by inversion, and the double is untwisted.
\end{exmp}

\section{Generalized property F}\label{sec:Generalized property F}
In this section, we prove the generalized property F
for weakly group-theoretical modular fusion categories. We start with the following standard lemma.
\begin{lem}\label{lem:FAtoFI}
Let $P$ be a group, and let $\rho_i:  P\to \PGL(H_i),\ {1\leq i\leq n}$ be projective representations on nonzero finite-dimensional vector spaces. If their tensor-product representation has finite projective image, then each $\rho_i(P)$ is finite.
\begin{proof}
The natural map
\[
\prod^n_{i=1}\PGL(H_i)\to\PGL(\bigotimes^{n}_{i=1} H_i) 
\]
is injective.
\end{proof}
\end{lem}
% \begin{proof}
% Denote the dimension of $H_i$ by $m_i$, as $\otimes^{n}_{i=1} \rho_i(\Gamma)$ is a finite group, the kernel of the following map will be a finite-index subgroup of $\Gamma$, we denote it by $\Gamma_0$.
% \[
% \Gamma\xrightarrow[]{\otimes^{n}_{i=1} \rho_i}\otimes^n_{i=1}\GL_{m_i}(\mathbb{C}). 
% \]
% Therefore, for any $f\in \Gamma_0$, there exist $(\lambda_i(f))_{1\leq i\leq n}\in \mathbb{C}^n$, such that $\prod^n_{i=1}\lambda_i(f)=1$ and $\rho_i(f)=\lambda_i(f) I_{n_i}$. Now, since $\Gamma$ has a finite abelianization, denote the order of the abelianization by $l$, then $\det(\rho_i(f))$ will be an $l$-th root of unity, hence $\lambda_i^{ln_i}=1$, $\ker(\rho_i)\cap \Gamma_0$ has finite index in $\Gamma_0$.
% \end{proof}

\begin{Thm}\label{thm:closed_property_F}
For any weakly group-theoretical modular fusion category $\AA$, equipped with its canonical positive spherical structure, $\AA$ has the generalized property F.   
\end{Thm}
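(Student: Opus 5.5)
The plan is to combine the categorical reduction of Section~\ref{sec:Categorical manipulation} with the blockwise tensor-power description of Theorem~\ref{thm:blockwise-tensor-power}, and then pass back to $\AA$ through Corollary~\ref{cor:SNRT}.

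\textbf{Step 1: choose $\DD$.} Since $\AA$ is weakly group-theoretical, apply \cite[Prop.~3.10]{GN21} to $\AA$ itself. This gives a braided $G$-crossed category $\CC$ with $\CC_e\simeq\CC(A,q)$ and $\ZZ(\AA)\simeq\CC^G$. Set $\DD=\CC\rtimes G$, which is graded by $H=G\rtimes_{\operatorname{Ad}}G$. By Theorem~\ref{thm:D8_gt}, $\DD^{\boxtimes_H 8}$ is group-theoretical. Hence $\ZZ(\DD^{\boxtimes_H 8})\simeq\ZZ(\mathrm{Vec}_K^\omega)$ as braided categories for some finite group $K$ and cocycle $\omega$. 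All categories involved are weakly group-theoretical, so by \cite[Cor.~6.2]{NS07a} this equivalence is ribbon for the canonical spherical structures.

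\textbf{Step 2: finiteness for $\DD^{\boxtimes_H 8}$ on $\Sigma_g^1$.} By Theorems~\ref{Thm:SNTV} and~\ref{thm:tvrt},
\[
\SN_{\DD^{\boxtimes_H 8}}(\Sigma_g^1)\cong\RT_{\ZZ(\DD^{\boxtimes_H 8})}(\Sigma_g^1;I(\one))
\]
equivariantly. Transporting along the ribbon equivalence identifies this with an RT space of $\ZZ(\mathrm{Vec}_K^\omega)$ with some boundary label. Gustafson's theorem \cite{Gustafson18} gives finite projective image for twisted Dijkgraaf--Witten theories on closed surfaces. Lemma~\ref{lem:rt-boundary-reduction} transfers this to $\Sigma_g^1$ with arbitrary labels, and Lemma~\ref{lem:rt-scalar-control} upgrades it to linear finiteness. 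Here the anomaly-free normalization for Drinfeld centers means the $\Mod$-action is genuinely linear. Therefore $\rho^{\DD^{\boxtimes_H 8}}_{\Sigma_g^1}$ has finite image.

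\textbf{Step 3: descend from the fiber power to $\DD$.} By Theorem~\ref{thm:blockwise-tensor-power}, $\rho^{\DD^{\boxtimes_H8}}$ permutes the blocks $(\SN_\DD(\Sigma_g^1)_\sigma)^{\otimes 8}$ compatibly with $\rho^\DD$. Let $\Gamma_0\subset\Mod(\Sigma_g^1)$ be the finite-index stabilizer of every $\sigma\in\HH(\Sigma_g^1;H)$; this set is finite, so $\Gamma_0$ has finite index. Take the finite-index subgroup $\Gamma_1\subset\Gamma_0$ on which $\rho^{\DD^{\boxtimes_H8}}$ is trivial. Then for $f\in\Gamma_1$ and each nonzero block, $\rho^\DD_\sigma(f)^{\otimes 8}=\id$. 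By Lemma~\ref{lem:FAtoFI} (or directly), $\rho^\DD_\sigma(f)$ is a scalar $\lambda$ with $\lambda^8=1$. So $\rho^\DD(\Gamma_1)$ lies in a finite group of block-diagonal eighth-root-of-unity scalars, and $\rho^\DD_{\Sigma_g^1}$ has finite image. Corollary~\ref{cor:SNRT} then gives generalized property F for $\ZZ(\DD)$.

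\textbf{Step 4: pass to $\AA$.} $\DD$ is Morita equivalent to $\CC^G\simeq\ZZ(\AA)$, so $\ZZ(\DD)\simeq\ZZ(\ZZ(\AA))$. Since $\AA$ is modular,
\[
\ZZ(\ZZ(\AA))\simeq\ZZ(\AA)\boxtimes\ZZ(\AA)^{\mathrm{rev}}\simeq\AA\boxtimes\AA^{\mathrm{rev}}\boxtimes\AA^{\mathrm{rev}}\boxtimes\AA,
\]
and these equivalences are ribbon by positivity. For a decorated surface $\Sigma$ labeled by objects of $\AA$, label by $\boxtimes$ with the unit in the other factors. The RT space of the product is then $\RT_\AA(\Sigma)\otimes V$, with the mapping class group acting diagonally and projectively, where $V$ is the tensor product of the other three factors with trivial labels. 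Lemma~\ref{lem:FAtoFI} then gives finite projective image on $\RT_\AA(\Sigma)$, provided $V\ne0$. This holds because closed-surface spaces of a modular category are nonzero, and unit-labelled spaces contain the vacuum. If $\RT_\AA(\Sigma)=0$ there is nothing to prove.

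\textbf{Main obstacle.} I expect Step 2 to need the most care. One must check that the equivariant identifications chain together with consistent normalizations: Gustafson's result is stated for closed surfaces of $\ZZ(\mathrm{Vec}_K^\omega)$, and the anomaly phases must match so that finiteness is transported linearly rather than only projectively. Lemmas~\ref{lem:rt-scalar-control} and~\ref{lem:rt-boundary-reduction} should absorb this.
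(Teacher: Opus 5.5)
Your proposal is correct and follows essentially the same route as the paper: Theorem~\ref{thm:D8_gt} combined with Gustafson's theorem gives finiteness for $\DD^{\boxtimes_H 8}$; the blockwise tensor-power identity forces eighth-root-of-unity scalars on each block and so descends finiteness to $\DD$; Corollary~\ref{cor:SNRT} then applies; finally the ribbon equivalence $\ZZ(\DD)\simeq\AA\boxtimes\AA^{\mathrm{rev}}\boxtimes\AA^{\mathrm{rev}}\boxtimes\AA$ with Lemma~\ref{lem:FAtoFI} passes to $\AA$. The only difference is in your Step 2. The paper uses Gustafson's result in its native form, string-nets of $\operatorname{Vec}^\omega_Q$ on $\Sigma_g^1$, and feeds it into Corollary~\ref{cor:SNRT}. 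You instead quote a closed-surface version, which itself follows from the bordered one via Lemma~\ref{lem:puncture-filling}, and return to $\Sigma_g^1$ through Lemmas~\ref{lem:rt-boundary-reduction} and~\ref{lem:rt-scalar-control}; this is harmless.
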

\begin{proof}
Let $\DD$ be the category constructed in Section \ref{sec:Categorical manipulation} and $H=G\rtimes_{\operatorname{Ad}}G$, denote the fiber power $\DD^{\boxtimes_H 8}$ by $\BB$ .  For any $g\geq 0$, from \cite{Gustafson18}, the representation 
\[
\rho_{\Sigma_{g}^1}^{\Vect^{\omega}_Q}:\Mod(\Sigma_{g}^1)\to \GL(\SN_{\Vect^{\omega}_Q}(\Sigma_{g}^1)),
\] 
has a finite image for any finite group $Q$, and normalized cocycle $\omega\in Z^3(Q,\BC^\times)$. Therefore, from Corollary \ref{cor:SNRT}, $\ZZ(\Vect^{\omega}_Q)$ has the generalized property F, which implies, from Theorem \ref{thm:D8_gt}, $\ZZ(\BB
)$ has the generalized property F. In particular, the representation 
\[
Z^{\ZZ(\BB)}_{\Sigma^1_{g}}:\Mod(\Sigma^1_{g})\to \GL (\RT_{\ZZ(\BB)}(\Sigma^1_{g}; I_{\BB}(\one)))
\] has a finite image, using Theorem \ref{thm:tvrt}, it is equivalent to the finiteness of the following representation:
\[
 \rho_{\Sigma_{g}^1}^{\BB}:\Mod(\Sigma_{g}^1)\to \GL(\operatorname{SN}_{\BB}(\Sigma_{g}^1)).
 \]
Thus $\ker(\rho_{\Sigma_{g}^1}^{\BB})$ is a finite index subgroup of $\Mod(\Sigma_{g}^1)$. Set $V_{\sigma}=\SN_{\DD}(\Sigma^1_{g})_{\sigma}$, and 
\[
S=\{\sigma\in \HH(\Sigma^1_g; H): V_\sigma\neq0\}.
\]
Write $K=\ker( \rho_{\Sigma_{g}^1}^{\BB})$   For $f\in K$, Theorem \ref{thm:blockwise-tensor-power} implies that  $f$ fixes every nonzero sector $V_\sigma$ and that
\[
\left(\rho^{\DD}_{\Sigma^1_{g},\sigma}(f)\right)^{\otimes 8} =\id_{V^{\otimes 8}_\sigma}\qquad (\sigma\in S).
\] 
Therefore $\rho^{\DD}_{\Sigma_g^1,\sigma}(f)=\lambda_{\sigma}(f)\id_{V_\sigma}$ with $\lambda_{\sigma}(f)^8=1$. Therefore
\[
 \rho^{\DD}_{\Sigma_g^1}(K)\subseteq\left\{\bigoplus_{\sigma\in S}\lambda_{\sigma}\id_{V_\sigma}:\lambda^8_{\sigma}=1\right\},
\]
so the image is finite. Since $K$ has finite index, the image of the following representation
\[
\rho_{\Sigma_{g}^1}^{\DD}:\Mod(\Sigma_{g}^1)\to \GL(\operatorname{SN}_{\DD}(\Sigma_{g}^1)),
\]
is finite. Again using Corollary \ref{cor:SNRT}, $\ZZ(\DD)$ has the generalized property F. 
Finally, ribbon equivalences 
 \[
 \ZZ(\DD)\simeq \ZZ(\ZZ(\AA))\simeq \AA\boxtimes \AA^{rev}\boxtimes\AA^{rev}\boxtimes \AA,
 \]
together with Lemma \ref{lem:FAtoFI} and \ref{lem:rt-scalar-control}, prove $\AA$ has the generalized property F. 
\end{proof}

\section*{Data Availability}
Data sharing is not applicable to this article as no datasets were generated or analysed during the current study.

\section*{Conflict of interest}
The authors have no conflicts of interest to declare that are relevant to the content of this
article.

\bibliographystyle{abbrv}
\bibliography{Reference}
\end{document}

%% file: Symbol.tex
\usepackage[utf8]{inputenc}
\usepackage{fullpage}
\usepackage[colorlinks,linkcolor=blue,citecolor=blue]{hyperref}
\usepackage{graphicx}
\usepackage{amsmath}
\usepackage{amsthm}
\usepackage{amsfonts}
\usepackage{amssymb}
\usepackage{bbm}
\usepackage{mathtools}
\usepackage{extarrows}
\usepackage{array}   % for \newcolumntype macro
\usepackage{booktabs, longtable}
\newcolumntype{L}{>{$}l<{$}}
\usepackage{tikz-cd}

\usepackage{scalerel}
\DeclareMathOperator*{\bigboxtimes}{\vcenter{\hbox{\scaleobj{1.85}{\boxtimes}}}}
\usepackage{pgf,tikz}
\usetikzlibrary{
  knots,
  hobby,
  decorations.pathreplacing,
  shapes.geometric,
  calc
}
\usetikzlibrary{arrows.meta,decorations.markings}

\usepackage{amsmath}
\usetikzlibrary{arrows.meta,decorations.markings,calc}
\usepackage{mathrsfs}
\usetikzlibrary{arrows}

\newtheorem{Thm}{Theorem}[section]

\newtheorem{Cor}[Thm]{Corollary}
\newtheorem{prop}[Thm]{Proposition}
\newtheorem{lem}[Thm]{Lemma}

\theoremstyle{definition}

\newtheorem{exmp}[Thm]{Example}

\theoremstyle{remark}

\newcommand{\one}{\mathbf{1}}

\usepackage{eucal}
\usepackage{graphicx}
\usepackage{tabularx}
\usepackage{multirow}
\usepackage[all,knot]{xy}

\DeclareMathOperator{\Rep}{Rep}

\DeclareMathOperator{\Vect}{Vec}

\DeclareMathOperator{\Mod}{Mod}

\DeclareMathOperator{\PGL}{PGL}

\DeclareMathOperator{\GL}{GL}

\DeclareMathOperator{\PMod}{PMod}

\DeclareMathOperator{\Irr}{Irr}
\DeclareMathOperator{\id}{id}

\DeclareMathOperator{\Obj}{Obj}
\newcommand{\RNum}[1]{\uppercase\expandafter{\romannumeral #1\relax}}

\newcommand{\BZ}{\mathbb{Z}}
\newcommand{\Zn}[1]{\BZ/{#1}\BZ}
\DeclareMathOperator{\Aut}{Aut}

\usepackage{graphicx}
\graphicspath{ {./graph/} }

\newcommand{\PP}{\mathcal{P}}
\newcommand{\BB}{\mathcal{B}}
\newcommand{\BC}{\mathbb{C}}

\renewcommand{\AA}{\mathcal{A}}
\newcommand{\CC}{{\mathcal{C}}}
\newcommand{\DD}{\mathcal{D}}

\newcommand{\HH}{\mathcal{H}}
\newcommand{\LL}{\mathcal{L}}
\newcommand{\ZZ}{\mathcal{Z}}
\newcommand{\EE}{\mathcal{E}}

\newcommand{\Hom}{\operatorname{Hom}}

\providecommand{\TV}{\operatorname{TV}}
\providecommand{\RT}{\operatorname{RT}}
\newcommand{\SN}{\mathrm{SN}}

\newcommand{\FPdim}{\operatorname{FPdim}}
\newcommand{\bb}{\mathbf{B}}

\def\HH{\mathcal{H}}

%% file: moves.tex
\begin{tikzpicture}[
  x=0.78cm,y=0.86cm,font=\footnotesize,
  line cap=round,line join=round,
  gmGraph/.style={draw=black!70,line width=0.65pt,
    -{Stealth[length=1.45mm,width=1mm]}},
  gmPath/.style={draw=blue!65!black,line width=0.85pt,
    -{Stealth[length=1.7mm,width=1.2mm]}},
  gmTime/.style={draw=black!50,line width=0.6pt,
    -{Stealth[length=1.4mm,width=1mm]}},
  gmLabel/.style={font=\footnotesize,text=black!80,inner sep=1pt},
  gmCurveLabel/.style={font=\footnotesize,text=blue!65!black,inner sep=1pt}
]

% Column 1: crossing an edge tangency, with identical horizontal ends.
% Cubed bumps have zero first and second derivatives at the joins.
\foreach \xx/\dip in {0/0.20,3.60/0.60} {
  \begin{scope}[shift={(\xx,0)}]
    \draw[gmGraph] (-1.08,0)--(1.08,0);
    \node[gmLabel] at (0.96,-0.22) {};
    \draw[gmPath] (-1.10,0.36)--(-0.94,0.36)
      plot[domain=-0.94:0.94,samples=81]
        (\x,{0.36-\dip*(1-(\x/0.94)^2)^3})
      --(1.10,0.36);
    \node[gmCurveLabel] at (-0.68,0.52) {$\gamma$};
  \end{scope}
}
\draw[gmTime] (1.40,0)--(2.20,0);

% Column 2: crossing an n-valent vertex v, with multiple crossings on each side.
% Left fan: h_1,...,h_k; right fan: h_{k+1},...,h_n.
% Both paths agree on their terminal segments and have vertical end tangents.
\foreach \xx/\bend/\labelx/\labely in {8.00/-0.57/-0.62/-0.68,13.80/0.74/0.12/-0.68} {
  \begin{scope}[shift={(\xx,0)}]
    \draw[gmGraph] (-0.24,0)--(-1.28,-0.60);
    \draw[gmGraph] (-0.24,0)--(-1.28,0);
    \draw[gmGraph] (-0.24,0)--(-1.28,0.64);
    \draw[gmGraph] (-0.24,0)--(0.96,-0.60);
    \draw[gmGraph] (-0.24,0)--(0.96,-0.23);
    \draw[gmGraph] (-0.24,0)--(0.96,0.64);
    \node[gmLabel,below=5pt] at (-1.28,-0.60) {$h_1$};
    \node[gmLabel,left=5pt] at (-1.28,0) {$h_2$};
    \foreach \dotx/\doty in {-1.09/0.15,-1.04/0.26,-0.98/0.37}
      \fill[black!70] (\dotx,\doty) circle[radius=0.55pt];
    \node[gmLabel,above=5pt] at (-1.28,0.64) {$h_k$};
    \node[gmLabel,below=5pt] at (0.96,-0.60) {$h_{n}$};
    \node[gmLabel,right=5pt] at (0.96,-0.23) {$h_{n-1}$};
    \foreach \dotx/\doty in {0.75/0.15,0.70/0.26,0.63/0.37}
      \fill[black!70] (\dotx,\doty) circle[radius=0.55pt];
    \node[gmLabel,above=5pt] at (0.96,0.64) {$h_{k+1}$};
    \draw[gmPath] (-0.24,-0.90)--(-0.24,-0.75)
      plot[domain=-0.75:0.75,samples=81,variable=\t]
        ({-0.24+\bend*(1-(\t/0.75)^2)^3},\t)
      --(-0.24,0.90);
    \node[gmCurveLabel] at (\labelx,\labely) {$\gamma$};
    \fill[black!80] (-0.24,0) circle[radius=1.3pt];
    \node[gmLabel,above=4pt] at (-0.24,0) {$v$};
  \end{scope}
}
\draw[gmTime] (10.30,0)--(11.25,0);
\end{tikzpicture}

%% file: surface_polygon_dual_empty.tex
% Input-ready TikZ picture. Load \usepackage{tikz} in the main preamble.
% Empty boundary value: no marked p-points and no boundary-value halfedges.
% Both boundary cuts start at the same left polygon copy of q_n.
\begingroup
\usetikzlibrary{arrows.meta,decorations.markings,calc}
\definecolor{cutblue}{RGB}{30,83,147}
\definecolor{dualred}{RGB}{178,42,47}
\begin{tikzpicture}[font=\large,line cap=round,line join=round,
 primal/.style={cutblue,line width=1.8pt,postaction={decorate},decoration={markings,mark=at position .78 with {\arrow{Stealth[length=5.4pt,width=4.5pt]}}}},
 dual/.style={dualred,line width=1.15pt},
 outflow/.style={postaction={decorate},decoration={markings,mark=at position .66 with {\arrow{Stealth[length=5pt,width=4pt]}}}},
 inflow/.style={postaction={decorate},decoration={markings,mark=at position .66 with {\arrow{Stealth[reversed,length=5pt,width=4pt]}}}},
 duallabel/.style={dualred,font=\large}]
% Mirror-symmetric convex polygon, with raised commutator blocks.
\coordinate (R0) at (9,-6.5);
\coordinate (R1) at (11,1.5);
\coordinate (R2) at (9.9,5.5);
\coordinate (R3) at (7,8);
\coordinate (R4) at (3.8,8.7);
\coordinate (L0) at (-3.8,8.7);
\coordinate (L1) at (-7,8);
\coordinate (L2) at (-9.9,5.5);
\coordinate (L3) at (-11,1.5);
\coordinate (L4) at (-9,-6.5);
\fill[gray!2] (L0)--(L1)--(L2)--(L3)--(L4)
 .. controls (-5.8,-8) and (5.8,-8) .. (R0)--(R1)--(R2)--(R3)--(R4)
 .. controls (1.4,9.15) and (-1.4,9.15) .. (L0)--cycle;
\draw[cutblue,line width=1.8pt,dash pattern=on 3pt off 5pt] (R4)
 .. controls (1.4,9.15) and (-1.4,9.15) .. (L0);
\foreach \k in {0,...,3}{
 \pgfmathtruncatemacro{\next}{\k+1}
 \coordinate (TR\k) at ($(R\k)!.5!(R\next)$);
 \coordinate (TL\k) at ($(L\k)!.5!(L\next)$);
 \ifnum\k<2
  \draw[primal] (R\k)--(R\next);
  \draw[primal] (L\k)--(L\next);
 \else
  \draw[primal] (R\next)--(R\k);
  \draw[primal] (L\next)--(L\k);
 \fi
 \ifodd\k\def\letter{b}\else\def\letter{a}\fi
 \ifnum\k=3
  \node[cutblue,above=8pt] at (TR\k) {$\letter_1$};
 \else\ifnum\k=2
  \node[cutblue,above right=6pt] at (TR\k) {$\letter_1$};
 \else
  \node[cutblue,right=8pt] at (TR\k) {$\letter_1$};
 \fi\fi
 \ifnum\k=0
  \node[cutblue,above=8pt] at (TL\k) {$\letter_g$};
 \else\ifnum\k=1
  \node[cutblue,above left=6pt] at (TL\k) {$\letter_g$};
 \else
  \node[cutblue,left=8pt] at (TL\k) {$\letter_g$};
 \fi\fi
}
\draw[black,line width=1.8pt] (L4)
 .. controls (-5.8,-8) and (5.8,-8) .. (R0);
\node at (0,-8.15) {$\partial_n$};
\foreach \k in {0,...,4}{
 \fill (R\k) circle (1.7pt);
 \fill (L\k) circle (1.7pt);
}
\foreach \k in {0,1,2}{\node[font=\small,right=7pt] at (R\k) {$q_n$};}
\foreach \k in {2,3,4}{\node[font=\small,left=7pt] at (L\k) {$q_n$};}
\node[font=\small,above=6pt] at (R4) {$q_n$};
\node[font=\small,above=6pt] at (L0) {$q_n$};
\node[font=\small,above right=5pt] at (R3) {$q_n$};
\node[font=\small,above left=5pt] at (L1) {$q_n$};
% Boundary components retain the cutting-system basepoints only.
\foreach \i/\x/\boundarylabel in {1/-2.8/1,2/2.8/{n-1}}{
 \filldraw[white,draw=black,line width=1.8pt] (\x,-1) circle (1.20);
 \node at (\x,-1.05) {$\partial_{\boundarylabel}$};
 \coordinate (Q\i) at (\x,-2.20);
}
\node at (0,-1) {$\cdots$};
% Both cuts start at the left copy of q_n; c_{n-1} passes below the first loop.
\draw[cutblue,line width=1.8pt] (L4)
 .. controls (-5.2,-5.55) and (-2.8,-4.65) .. (Q1);
\draw[cutblue,line width=1.8pt] (L4)
 .. controls (-1.7,-6.35) and (2.8,-5) .. (Q2);
\node[cutblue] at (-4.4,-5.23) {$c_1$};
\node[cutblue] at (0,-5.7) {$c_{n-1}$};
\fill (Q1) circle (2.1pt);
\fill (Q2) circle (2.1pt);
\node[below left=4pt,font=\small] at (Q1) {$q_1$};
\node[below right=4pt,font=\small] at (Q2) {$q_{n-1}$};
% Handle dual halfedges, in reflected fans on either side of the coupon.
\draw[dual,inflow] (1.35,4.20)
 .. controls (2.3,6) and (4.35,7.55)
 .. node[pos=.35,right=6pt] {$Y_1$} node[pos=.66,left=6pt] {$b_1^\vee$} (TR3);
\draw[dual,inflow] (2.4,4)
 .. controls (4.8,4.7) and (6.65,6)
 .. node[pos=.36,below=4pt] {$X_1$} node[pos=.66,above=5pt] {$a_1^\vee$} (TR2);
\draw[dual,outflow] (2.4,3.7)
 .. controls (5.25,3.7) and (8.6,3.85)
 .. node[pos=.36,below=6pt] {$Y_1$} node[pos=.68,above=5pt] {$b_1^\vee$} (TR1);
\draw[dual,outflow] (2.4,3.4)
 .. controls (6.4,2.7) and (8.4,-1.8)
 .. node[pos=.38,below left=4pt] {$X_1$} node[pos=.72,above right=4pt] {$a_1^\vee$} (TR0);
\draw[dual,outflow] (-1.35,4.20)
 .. controls (-2.3,6) and (-4.35,7.55)
 .. node[pos=.35,left=6pt] {$X_g$} node[pos=.66,right=6pt] {$a_g^\vee$} (TL0);
\draw[dual,outflow] (-2.4,4)
 .. controls (-4.8,4.7) and (-6.65,6)
 .. node[pos=.36,below=4pt] {$Y_g$} node[pos=.66,above=5pt] {$b_g^\vee$} (TL1);
\draw[dual,inflow] (-2.4,3.7)
 .. controls (-5.25,3.7) and (-8.6,3.85)
 .. node[pos=.36,below=6pt] {$X_g$} node[pos=.68,above=5pt] {$a_g^\vee$} (TL2);
\draw[dual,inflow] (-2.4,3.4)
 .. controls (-6.4,2.7) and (-8.4,-1.8)
 .. node[pos=.38,below right=4pt] {$Y_g$} node[pos=.72,above left=4pt] {$b_g^\vee$} (TL3);
% Peripheral dual loops: each crosses its corresponding blue cut once.
\draw[dual,outflow] (-1.9,3.2)
 .. controls (-4,2.35) and (-5,.5) .. (-5,-1.1)
 .. controls (-5,-4.45) and (-.8,-4.45) .. (-.8,-1.1)
 .. controls (-.8,1.05) and (-.6,2.45) .. (-.55,3.2);
\draw[dual,outflow] (.55,3.2)
 .. controls (.6,2.45) and (.8,1.05) .. (.8,-1.1)
 .. controls (.8,-4.45) and (5,-4.45) .. (5,-1.1)
 .. controls (5,.5) and (4,2.35) .. (1.9,3.2);
\node[duallabel,left=3pt] at (-4.95,-.15) {$Z_1$};
\node[duallabel,right=3pt] at (4.95,-.15) {$Z_{n-1}$};
\node[duallabel] at (-4.05,-3.65) {$c_1^\vee$};
\node[duallabel] at (4.05,-3.65) {$c_{n-1}^\vee$};
% The coupon is the only vertex; no red strand ends on a true boundary.
\node[draw=dualred,line width=1.1pt,fill=white,
 minimum width=4.8cm,minimum height=1cm,inner sep=0pt]
 at (0,3.7) {$\varphi$};
\end{tikzpicture}
\endgroup